\numberwithin{equation}{section}
\theoremstyle{plain}
\newtheorem{thm}{Theorem}[section]
\newtheorem{lem}[thm]{Lemma}
\newtheorem{prop}[thm]{Proposition}
\theoremstyle{definition}
\newtheorem{dfn}[thm]{Definition}
\newtheorem{exa}[thm]{Example}
\theoremstyle{remark}
\newtheorem{rmk}[thm]{Remark}
\newcommand*{\dtgterm}[1]{\emph{#1}}  
\newcommand*{\avoidrefbreak}{\nolinebreak[3] }   
\newcommand*{\avoidcitebreak}{\nolinebreak[3] }  
\newcommand*{\itmform}[1]{\textup{(#1)}}
\newcommand*{\itmref}[1]{\itmform{\ref{#1}}}
\newcommand*{\comboref}[2]{%
  \ifdefined\hyperref%
    \hyperref[#2]{#1\avoidrefbreak \textup{\ref*{#2}}}%
  \else%
    #1\avoidrefbreak \textup{\ref{#2}}%
  \fi%
}
\newcommand*{\Z}{\mathbb{Z}}               
\newcommand*{\R}{\mathbb{R}}               
\DeclarePairedDelimiter{\parens}{\lparen}{\rparen}
\providecommand*{\given}{\relax}
\DeclarePairedDelimiterX{\set}[1]{\{}{\}}{%
  \renewcommand*{\given}{\mathclose{}\mathrel{\setsymbol[\delimsize]}\mathopen{}}%
  #1%
}
\DeclarePairedDelimiterXPP{\setcardin}[1]{\#}{\lvert}{\rvert}{}{#1}
\newcommand*{\powersetsymbol}{\mathcal{P}}
\DeclarePairedDelimiterXPP{\powerset}[1]{\powersetsymbol}{\lparen}{\rparen}{}{#1}
\DeclarePairedDelimiterX{\gnrt}[1]{\langle}{\rangle}{%
  \renewcommand*{\given}{\mathclose{}\mathrel{\setsymbol[\delimsize]}\mathopen{}}%
  #1%
}
\newcommand*{\pressymbol}[1][]{#1\vert}
\DeclarePairedDelimiterX{\pres}[1]{\langle}{\rangle}{%
  \renewcommand*{\given}{\mathclose{}\mathrel{\pressymbol[\delimsize]}\mathopen{}}%
  #1%
}
\newcommand*{\ew}{\epsilon}                     
\newcommand*{\lbedge}[1]{\xrightarrow[\hphantom{n-g}]{#1}}  
\DeclarePairedDelimiterX{\innerp}[2]{\langle}{\rangle}{#1,#2}  
\newcommand*{\vc}[1]{\mathbf{#1}}  
\newcommand*{\wbar}[1]{\overline{#1}}  
\newcommand*{\glin}{\mathfrak{gl}}  
\newcommand*{\symp}{\mathfrak{sp}}  
\newcommand*{\alphav}{\alpha^{\vee}}
\newcommand*{\betav}{\beta^{\vee}}
\newcommand*{\tA}{\mathfrak{A}}
\newcommand*{\tB}{\mathfrak{B}}
\newcommand*{\tC}{\mathfrak{C}}
\newcommand*{\tD}{\mathfrak{D}}
\newcommand*{\tAn}{\tA_{n}}
\newcommand*{\tBn}{\tB_n}
\newcommand*{\tCn}{\tC_n}
\newcommand*{\tDn}{\tD_n}
\newcommand*{\crst}[1]{\mathcal{#1}}
\DeclareMathOperator{\wt}{wt}
\newcommand*{\Koe}{\tilde{e}}
\newcommand*{\Kof}{\tilde{f}}
\newcommand*{\Koec}{\tilde{\varepsilon}}
\newcommand*{\Kofc}{\tilde{\varphi}}
\newcommand*{\undf}{\bot}  
\DeclareMathOperator{\sgn}{sgn}       
\newcommand*{\tpsgn}{\sgn^{\otimes}}  
\newcommand*{\crtA}{\crst{A}}
\newcommand*{\crtC}{\crst{C}}
\newcommand*{\crtAn}{\crtA_n}
\newcommand*{\crtCn}{\crtC_n}
\newcommand*{\ftpqcm}[1]{#1^{\otimes}}    
\newcommand*{\tpqKoe}{\qKoe^{\otimes}}
\newcommand*{\tpqKof}{\qKof^{\otimes}}
\newcommand*{\tpqKoec}{\qKoec^{\otimes}}
\newcommand*{\tpqKofc}{\qKofc^{\otimes}}
\newcommand*{\qcrst}[1]{\mathcal{#1}}
\newcommand*{\qcrstQ}{\qcrst{Q}}
\newcommand*{\qKoe}{\ddot{e}}
\newcommand*{\qKof}{\ddot{f}}
\newcommand*{\qKoec}{\ddot{\varepsilon}}
\newcommand*{\qKofc}{\ddot{\varphi}}
\newcommand*{\dotimes}{\mathbin{\ddot{\otimes}}}  
\newcommand*{\qtpsgn}{\sgn^{\dotimes}}  
\newcommand*{\qctC}{\qcrst{C}}
\newcommand*{\qcmon}[1]{\qcrst{#1}}
\newcommand*{\qcrstM}{\qcmon{M}}
\newcommand*{\fqtpqcm}[1]{#1^{\dotimes}}      
\newcommand*{\qtpqKoe}{\qKoe^{\dotimes}}
\newcommand*{\qtpqKof}{\qKof^{\dotimes}}
\newcommand*{\qtpqKoec}{\qKoec^{\dotimes}}
\newcommand*{\qtpqKofc}{\qKofc^{\dotimes}}
\DeclareMathOperator{\plac}{plac}
\newcommand*{\plco}{\approx}
\DeclareMathOperator{\hypo}{hypo}
\newcommand*{\hyco}{\mathrel{\ddot{\sim}}}
\newcommand*{\nhyco}{\mathrel{\not\hyco}}
\newlength\basiccrystalx
\newlength\basiccrystaly
\newlength\widecrystalx
\tikzset{%
  basiccrystal/.style={%
  x=\basiccrystalx,%
  y=\basiccrystaly,%
    every node/.append style={%
      execute at begin node=$,%
      execute at end node=$,%
    },%
    every path/.append style={%
      ->,%
      auto,%
      every node/.append style={%
        execute at begin node=\scriptstyle,%
      },%
    },%
  },%
  widecrystal/.style={%
  x=\widecrystalx,%
  y=\basiccrystaly,%
    every node/.append style={%
      execute at begin node=$,%
      execute at end node=$,%
    },%
    every path/.append style={%
      ->,%
      auto,%
      every node/.append style={%
        execute at begin node=\scriptstyle,%
      },%
    },%
  },%
}
\begin{document}

\title{From plactic monoids to hypoplactic monoids}

\author{Ricardo P. Guilherme}
\address{%
Center for Mathematics and Applications (NOVA Math)\\
NOVA FCT
}
\email{rj.guilherme@campus.fct.unl.pt}
\thanks{The author would like to thank the organizers of the Conference on Theoretical and Computational Algebra 2023 in Pocinho, Portugal, for providing such an inspiring scientific event.}
\thanks{This work was funded by national funds through the FCT -- Funda\c{c}\~{a}o para a Ci\^{e}ncia e a Tecnologia, I.P., under the scope of the projects UIDB/00297/2020 and UIDP/00297/2020 (Center for Mathematics and Applications).}

\begin{abstract}
The plactic monoids can be obtained from the tensor product of crystals.
Similarly, the hypoplactic monoids can be obtained from the quasi-tensor product of quasi-crystals.
In this paper, we present a unified approach to these constructions by expressing them in the context of quasi-crystals.
We provide a sufficient condition to obtain a quasi-crystal monoid for the quasi-tensor product from a quasi-crystal monoid for the tensor product.
We also establish a sufficient condition for a hypoplactic monoid to be a quotient of the plactic monoid associated to the same seminormal quasi-crystal.
\end{abstract}

\keywords{Quasi-crystal, hypoplactic monoid, plactic monoid, equidivisible}

\subjclass[2020]{Primary 20M10; Secondary 05E16, 20M05}

\maketitle


\section{Introduction}
\label{sec:introduction}

The plactic monoid, formally introduced  by Lascoux and Sch\"{u}tzenberger\avoidcitebreak \cite{LS81}, is an algebraic object of great interest, with connections to several fields such as representation theory, combinatorics\avoidcitebreak \cite{Ful97}, symmetric functions, and Schubert polynomials\avoidcitebreak \cite{LS85,LS89}.
It was also used to give a first rigorous proof of the Littlewood--Richardson rule\avoidcitebreak \cite{LR34}.
It originally emerged from Young tableaux and the Schensted insertion algorithm\avoidcitebreak \cite{Sch61} with a presentation given by the Knuth relations\avoidcitebreak \cite{Knu70}.

Kashiwara\avoidcitebreak \cite{Kas90,Kas91,Kas94}, following the work by Date, Jimbo and Miwa\avoidcitebreak \cite{DJM90}, introduced crystal bases for modules of quantized universal enveloping algebras (also known as quantum groups), discovered independently by Drinfel'd\avoidcitebreak \cite{Dri85} and Jimbo\avoidcitebreak \cite{Jim85}.
Crystal bases can be described by weighted labelled graphs that are called crystal graphs.
Kashiwara showed that the plactic monoid arises from the crystal basis associated with the vector representation of the quantized universal enveloping general linear Lie algebra by identifying elements in the same position of isomorphic connected components of the associated crystal graph.
This result allowed a deeper study of the plactic monoid and its generalization, because the underlying construction still results in a monoid for crystal bases associated with other quantized universal enveloping algebras, as it only relies on the definition of seminormal crystals and their tensor product. 
Thus, based on the work by Kashiwara and Nakashima\avoidcitebreak \cite{KN94}, Lecouvey\avoidcitebreak \cite{Lec02,Lec03} presented comprehensive descriptions of the plactic monoids for the Cartan types $\tBn$, $\tCn$, and $\tDn$, which later appeared in a survey\avoidcitebreak \cite{Lec07}.
In recent works, Cain, Gray and Malheiro\avoidcitebreak \cite{CGM15f,CGM19} presented rewriting systems and biautomatic structures for these monoids.

The hypoplactic monoid, introduced by Krob and Thibon\avoidcitebreak \cite{KT97} and studied by Novelli\avoidcitebreak \cite{Nov00}, emerged from a noncommutative realization of quasi-symmetric functions analogous to the realization of symmetric functions by the plactic monoid presented by Lascoux and Sch\"{u}tzenberger\avoidcitebreak \cite{LS81}.
It was originally obtained from quasi-ribbon tableaux and an insertion algorithm with a presentation consisting of the Knuth relations and the quartic relations.
A comparative study with other monoids was done by Cain, Gray and Malheiro in\avoidcitebreak \cite{CGM15r}, where a rewriting system and a biautomatic structure for the hypoplactic monoid is presented.
Recently, Cain, Malheiro and Ribeiro\avoidcitebreak \cite{CMR22} provide a complete description of the identities satisfied by the hypoplactic monoid.

To obtain a construction of the hypoplactic monoid analogous to the construction of the plactic monoid from crystals, 
a first notion of quasi-crystal graph was introduced by Krob and Thibon\avoidcitebreak \cite{KT99}.
To overcome some limitations of this notion, 
Cain and Malheiro\avoidcitebreak \cite{CM17crysthypo} described a new notion of quasi-crystal graph, 
which is equivalent to another notion considered recently by Maas-Gari\'{e}py\avoidcitebreak \cite{Maa23},
from which the hypoplactic monoid arises by identifying words in the same position of isomorphic connected components.
These notions of quasi-crystal graphs are based on the crystal graph for Cartan type $\tAn$, and the construction does not result in a monoid if this crystal graph is replaced by the crystal graph for another Cartan type.

Cain, Malheiro, and the present author\avoidcitebreak \cite{CGM23quasi-crystals} introduced a general notion of quasi-crystals associated to a root system.
This notion is further studied in\avoidcitebreak \cite{CMRR23}.
It allows the construction of the hypoplactic monoid by identifying words in the same position of isomorphic connected components of a quasi-crystal associated to type $\tAn$.
Moreover, it allows the generalization of the hypoplactic monoid, because this construction still results in a monoid for any other seminormal quasi-crystal, as it only relies on the definition of quasi-tensor product of seminormal quasi-crystals, also introduced in\avoidcitebreak \cite{CGM23quasi-crystals}.

For this notion, crystals are quasi-crystals; in particular, the class of seminormal crystals is contained on the class of seminormal quasi-crystals.
Also, the definition of tensor product of seminormal crystals extends in a natural way to seminormal quasi-crystals.
Thus, quasi-crystals form a framework where the constructions of plactic monoids and hypoplactic monoids can be expressed.

The aim of this paper is to formalize and develop this unified approach where plactic and hypoplactic monoids are associated with the same combinatorial objects: quasi-crystals.
This contrasts with previous approaches in the literature, which obtain independently plactic monoids from crystals and hypoplactic monoids from (some notion of) quasi-crystals.
As shown by this unified construction, plactic and hypoplactic monoids emerge from tensor and quasi-tensor products of quasi-crystals, respectively.
As the underlying combinatorial objects are the same, it allows a comparative study between plactic and hypoplactic monoids, where some well-known constructions for the classical case are shown to be consequence of properties of the underlying quasi-crystals.
This unified construction also allows a deeper understanding of the results obtained in\avoidcitebreak \cite{CGM23quasi-crystals} for the relation between the plactic and hypoplactic monoids of type $\tCn$.

This paper is structured as follows.
\comboref{Section}{sec:preliminaries} introduces the necessary background on root systems and quasi-crystals.
\comboref{Section}{sec:plmqc} presents a construction of the plactic monoid based on the tensor product of quasi-crystals, 
and \comboref{Section}{sec:hypomqc} presents an analogous construction of the hypoplactic monoid based on the quasi-tensor product of quasi-crystals.
From these sections, plactic and hypoplactic monoids are obtained as a quotient of the free quasi-crystal monoid for the respective product of quasi-crystals.
\comboref{Section}{sec:tpqcmqtpqcm} shows a construction of quasi-crystal monoids for the quasi-tensor product from quasi-crystal monoids for the tensor product, whenever the monoids are equidivisible.
When applied to the free quasi-crystal monoids, it results in the constructions described in\avoidcitebreak \cite{CM17crysthypo,CGM23quasi-crystals}.
In \comboref{Section}{sec:hypomplacm}, it is given a sufficient condition for the hypoplactic monoid to be a quotient of the plactic monoid associated to the same quasi-crystal.
For type $\tAn$, this result is an alternative prove of the well-known fact that the classical hypoplactic monoid is a quotient of the classical plactic monoid, and for type $\tCn$, it is a step towards understanding the relation between plactic and hypoplactic monoids for this type.

\section{Preliminaries}
\label{sec:preliminaries}

In this section, we present the necessary background on crystals and quasi-crystals.
We will introduce crystals as a subclass of quasi-crystals, so we will not need to present a complete introduction to crystals.
We refer
to\avoidcitebreak \cite{Kas95} for an introduction to crystals as they originally emerged in connection to quantized universal enveloping algebras (also called quantum groups)
or\avoidcitebreak \cite{HK02} for a comprehensive background on this approch,
and to\avoidcitebreak \cite{BS17} for a study of crystals detached from their origin,
For a detailed study of quasi-crystals, see\avoidcitebreak \cite{CGM23quasi-crystals}.

\subsection{Root systems}
\label{subsec:rootsystems}

We first give the essential background on root systems, as these algebraic structures will be used to define crystals and quasi-crystals.
Root systems are commonly found in representation theory, in particular, they arise on the study of Lie groups and Lie algebras, but we will detach them from this context, as we will only introduce what we need for our purpose.
For further context see for example\avoidcitebreak \cite{FH91,EW06,Bum13}.

Let $V$ be a Euclidean space, that is, a real vector space with an inner product $\innerp{\,{\cdot}\,}{\,{\cdot}\,}$.
For $\alpha \in V$ other than $0$, denote by $r_{\alpha}$ the \dtgterm{reflection} in the hyperplane orthogonal to $\alpha$, which is given by
\[
r_{\alpha} (v) = v - \innerp[\big]{v}{\alphav} \alpha,
\quad \text{where} \quad
\alphav = \frac{2}{\innerp{\alpha}{\alpha}} \alpha,
\]
for each $v \in V$. Note that $r_\alpha$ is bijective, as $r_{\alpha} \parens[\big]{ r_{\alpha} (v) } = v$, for all $v \in V$. Also, $r_\alpha$ preserves the inner product, as $\innerp[\big]{r_\alpha (u)}{r_\alpha (v)} = \innerp{u}{v}$ for any $u, v \in V$.

\begin{dfn}
A \dtgterm{root system} in $V$ is a subset $\Phi$ of $V$ satisfying the following conditions:
\begin{enumerate}
\item
$\Phi$ is nonempty, finite, and $0 \notin \Phi$;

\item
$r_\alpha (\beta) \in \Phi$, for all $\alpha, \beta \in \Phi$;

\item
$\innerp[\big]{\alpha}{\betav} \in \Z$, for all $\alpha, \beta \in \Phi$;

\item
if $\alpha \in \Phi$ and $k \alpha \in \Phi$, then $k = \pm 1$.
\end{enumerate}
The elements of $\Phi$ are called \dtgterm{roots}, and the elements $\alphav$, with $\alpha \in \Phi$, are called \dtgterm{coroots}.
\end{dfn}


Together with a root system, we always fix an index set $I$ and \dtgterm{simple roots} $(\alpha_i)_{i \in I}$, that is, a collection of roots satisfying the following conditions:
\begin{itemize}
\item
$\set{\alpha_i \given i \in I}$ is a linearly independent subset of $V$; and

\item
every root $\beta \in \Phi$ can be expressed as $\beta = \sum_{i \in I} k_i \alpha_i$, where all $k_i$ are either nonnegative or nonpositive integers.
\end{itemize}


Finally, together with a root system we also consider the following structure.

\begin{dfn}
A \dtgterm{weight lattice} $\Lambda$ is a $\Z$-submodule of $V$ satisfying the following conditions:
\begin{enumerate}
\item
$\Lambda$ spans $V$;

\item
$\Phi \subseteq \Lambda$;

\item
$\innerp[\big]{\lambda}{\alphav} \in \Z$, for any $\lambda \in \Lambda$ and $\alpha \in \Phi$.
\end{enumerate}
The elements of $\Lambda$ are called \dtgterm{weights}.
\end{dfn}


In the subsequent sections, we always consider a root system $\Phi$ with weight lattice $\Lambda$ and index set $I$ for the simple roots $(\alpha_i)_{i \in I}$.
The only non-arbitrary root systems that will be considered are the root systems associated to Cartan types $\tAn$ and $\tCn$.

Let $n \geq 2$. Consider $V$ to be the real vector space $\R^n$ with the usual inner product,
and denote by $\vc{e_i} \in \R^n$ the $n$-tuple with $1$ in the $i$-th position, and $0$ elsewhere, $i = 1, 2, \ldots, n$.
The root system associated to Cartan type $\tAn$ based on the general linear Lie algebra $\glin_n$ consists of
$\Phi = \set{ \vc{e_i} - \vc{e_j} \given i \neq j }$,
the index set for the simple roots is $I = \set{1, 2, \ldots, n-1}$,
the simple roots are
$\alpha_i = \vc{e_i} - \vc{e_{i+1}}$, $i=1,2,\ldots,n-1$,
and the weight lattice is $\Lambda = \Z^{n}$.
For this type, we follow the notation used in\avoidcitebreak \cite{Lec02,CGM19}, although it is also commonly denoted in the literature by $\tA_{n-1}$.

The root system associated to Cartan type $\tCn$ based on the symplectic Lie algebra $\symp_{2n}$ consists of
$\Phi = \set{ \pm \vc{e_i} \pm \vc{e_j} \given i < j } \cup \set{ \pm 2\vc{e_i} \given i = 1, 2, \ldots, n }$,
the index set for the simple roots is $I = \set{1, 2, \ldots, n}$, the simple roots are
$\alpha_i = \vc{e_i} - \vc{e_{i+1}}$, $i=1,2,\ldots,n-1$, and $\alpha_n = 2\vc{e_n}$,
and the weight lattice is $\Lambda = \Z^{n}$.

For more examples of root systems see\avoidcitebreak \cite[Examples~2.4 to~2.10]{BS17}.

\subsection{Crystals and quasi-crystals}
\label{subsec:crystalsandquasi-crystals}

Consider $\Z \cup \set{ +\infty }$ to be the usual set of integers where we add a maximal element $+\infty$, that is, $m < +\infty$ for all $m \in \Z$.
Also, set $m + (+\infty) = (+\infty) + m = +\infty$, for all $m \in \Z$.

\begin{dfn}
\label{dfn:qc}
Let $\Phi$ be a root system with weight lattice $\Lambda$ and index set $I$ for the simple roots $(\alpha_i)_{i \in I}$.
A \dtgterm{seminormal quasi-crystal} $\qcrstQ$ of type $\Phi$ consists of a set $Q$ together with maps ${\wt} : Q \to \Lambda$, $\qKoe_i, \qKof_i : Q \to Q \sqcup \{\undf\}$ and $\qKoec_i, \qKofc_i : Q \to \Z \cup \set{ +\infty }$, for each $i \in I$, satisfying the following conditions:
\begin{enumerate}
\item\label{dfn:qcwt}
$\qKofc_i (x) = \qKoec_i (x) + \innerp[\big]{\wt (x)}{\alphav_i}$;

\item\label{dfn:qcqKoe}
if $\qKoe_i (x) \in Q$, then
$\wt \parens[\big]{ \qKoe_i (x) } = \wt (x) + \alpha_i$;

\item\label{dfn:qcqKof}
if $\qKof_i (x) \in Q$, then
$\wt \parens[\big]{ \qKof_i (x) } = \wt (x) - \alpha_i$;

\item\label{dfn:qciff}
$\qKoe_i (x) = y$ if and only if $x = \qKof_i (y)$;


\item\label{dfn:qcpinfty}
if $\qKoec_i (x) = +\infty$ then $\qKoe_i (x) = \qKof_i (x) = \undf$;

\item\label{dfn:qcsn}
if $\qKoec_i (x) \neq +\infty$, then
\[ \qKoec_i (x) = \max \set[\big]{ k \in \Z_{\geq 0} \given \qKoe_i^k (x) \in Q } \]
and
\[ \qKofc_i (x) = \max \set[\big]{ k \in \Z_{\geq 0} \given \qKof_i^k (x) \in Q }. \]
\end{enumerate}
for $x, y \in Q$ and $i \in I$.
The set $Q$ is called the \dtgterm{underlying set} of $\qcrstQ$, and the maps $\wt$, $\qKoe_i$, $\qKof_i$, $\qKoec_i$ and $\qKofc_i$ ($i \in I$) form the \dtgterm{quasi-crystal structure} of $\qcrstQ$.
Also, the map $\wt$ is called the \dtgterm{weight map}, where $\wt (x)$ is said to be the \dtgterm{weight} of $x \in Q$, and the maps $\qKoe_i$ and $\qKof_i$ ($i \in I$) are called the \dtgterm{raising} and \dtgterm{lowering quasi-Kashiwara operators}, respectively.
\end{dfn}

In this definition, $\undf$ is an auxilary symbol.
For $x \in Q$, by $\qKoe_i (x) = \undf$ (or $\qKof_i (x) = \undf$) we mean that $\qKoe_i$ (resp., $\qKof_i$) is \dtgterm{undefined} on $x$. On the other hand, we say that $\qKoe_i$ (or $\qKof_i$) is \dtgterm{defined} on $x$ whenever $\qKoe_i (x) \in Q$ (resp., $\qKof_i (x) \in Q$).
Thus, alternatively one can consider the quasi-Kashiwara operators $\qKoe_i$ and $\qKof_i$ ($i \in I$) to be partial maps from $Q$ to $Q$.
When this point of view is more suitable to describe quasi-Kashiwara operators, we will make use of it.

From condition\avoidrefbreak \itmref{dfn:qcwt} above, observe that $\qKoec_i (x) = +\infty$ if and only if $\qKofc_i (x) = +\infty$, for any $x \in Q$ and $i \in I$.

As in this paper we will only consider quasi-crystals that are seminormal, i.e., quasi-crystals satisfying condition\avoidrefbreak \itmref{dfn:qcsn} above, we included it directly in the definition.
In the sequel, when we only write ``quasi-crystal'', we always mean ``seminormal quasi-crystal''.

A \dtgterm{seminormal crystal} is a quasi-crystal $\qcrstQ$ such that $\qKoec_i (x) \neq +\infty \neq \qKofc_i (x)$, for all $x \in Q$ and $i \in I$.
For a crystal, we usually denote $\qKoe_i$, $\qKof_i$, $\qKoec_i$, and $\qKofc_i$ ($i \in I$) by $\Koe_i$, $\Kof_i$, $\Koec_i$, and $\Kofc_i$, respectively.
Also, $\Koe_i$ and $\Kof_i$ ($i \in I$) are called Kashiwara operators.

As we now describe, a seminormal quasi-crystal can be completely encoded in a graph.
See\avoidcitebreak \cite[\S{}~4]{CGM23quasi-crystals} for further background.

\begin{dfn}
\label{dfn:qcg}
Let $\Phi$ be a root system with weight lattice $\Lambda$ and index set $I$ for the simple roots $(\alpha_i)_{i \in I}$.
The \dtgterm{quasi-crystal graph} $\Gamma_\qcrstQ$ of a quasi-crystal $\qcrstQ$ of type $\Phi$ is a $\Lambda$-weighted $I$-labelled directed graph with vertex set $Q$ and an edge $x \lbedge{i} y$ from $x \in Q$ to $y \in Q$ labelled by $i \in I$ whenever $\qKof_i (x) = y$, and a loop on $x \in Q$ labelled by $i \in I$ whenever $\qKoec_i (x) = +\infty$.
For $x \in Q$, let $\Gamma_\qcrstQ (x)$ denote the connected component of $\Gamma_\qcrstQ$ containing the vertex $x$.
\end{dfn}

A \dtgterm{crystal graph} is a quasi-crystal graph that does not have loops.
Thus, we have that $\Gamma_{\qcrstQ}$ is a crystal graph if and only if $\qcrstQ$ is a crystal.

\begin{exa}
\label{exa:qctAn}
Consider the root system of type $\tAn$.
The \dtgterm{standard crystal $\crtAn$ of type $\tAn$} is given as follows.
The underlying set is the ordered set $A_n = \set{1 < 2 < \cdots < n}$.
For $x \in A_n$, the weight of $x$ is $\wt(x) = \vc{e_x}$.
For $i=1,2,\ldots,n-1$, the Kashiwara operators $\Koe_i$ and $\Kof_i$ are only defined on $i+1$ and $i$, respectively, where $\Koe_i (i+1) = i$ and $\Kof_i (i) = i+1$.
Finally, $\Koec_i (x) = \delta_{x, i+1}$ and $\Kofc_i (x) = \delta_{x, i}$,
where $\delta_{k, l} = 1$ if $k = l$, and $\delta_{k, l} = 0$ whenever $k \neq l$.

The crystal graph $\Gamma_{\crtAn}$ is
\[ 1 \lbedge{1} 2 \lbedge{2} 3 \lbedge{3} \cdots \lbedge{n-1} n, \]
where the weight map is defined as above.
\end{exa}

\begin{exa}
\label{exa:qctCn}
Consider the root system of type $\tCn$.
The \dtgterm{standard crystal $\crtCn$ of type $\tCn$} is defined as follows.
The underlying set is the ordered set $C_n = \set{1 < 2 < \cdots < n < \wbar{n} < \wbar{n-1} < \cdots < \wbar{1}}$.
For $x \in \set{1,2,\ldots,n}$, the weight of $x$ is $\wt(x) = \vc{e_x}$,
and the weight of $\wbar{x}$ is $\wt(\wbar{x}) = -\vc{e_x}$.
For $i=1,2,\ldots,n-1$, the Kashiwara operators $\Koe_i$ and $\Kof_i$ are only defined on the following cases:
$\Koe_i (i+1) = i$, $\Koe_i (\wbar{i}) = \wbar{i+1}$, $\Kof_i (i) = i+1$, and $\Kof_i (\wbar{i+1}) = \wbar{i}$.
The Kashiwara operators $\Koe_n$ and $\Kof_n$ are only defined on $\wbar{n}$ and $n$, respectively, where $\Koe_n (\wbar{n}) = n$ and $\Kof_n (n) = \wbar{n}$.
Finally, for $y \in C_n$, $\Koec_i (y) = \delta_{y, i+1} + \delta_{y, \wbar{i}}$, $\Koec_n (y) = \delta_{y, \wbar{n}}$, $\Kofc_i (y) = \delta_{y, i} + \delta_{y, \wbar{i+1}}$, and $\Kofc_n (y) = \delta_{y, n}$.

The crystal graph $\Gamma_{\crtCn}$ is
\[ 1 \lbedge{1} 2 \lbedge{2} \cdots \lbedge{n-1} n \lbedge{n} \wbar{n} \lbedge{n-1} \wbar{n-1} \lbedge{n-2} \cdots \lbedge{1} \wbar{1}, \]
where the weight map is defined as above.
\end{exa}

To conclude, we introduce the notion of homomorphism between seminormal quasi-crystals.

\begin{dfn}
\label{dfn:qch}
Let $\qcrstQ$ and $\qcrstQ'$ be seminormal quasi-crystals of the same type.
A \dtgterm{quasi-crystal homomorphism} $\psi$ from $\qcrstQ$ to $\qcrstQ'$, denoted by $\psi : \qcrstQ \to \qcrstQ'$, is a map $\psi : Q \sqcup \set{\undf} \to Q' \sqcup \set{\undf}$ that satisfies the following conditions:
\begin{enumerate}
\item\label{dfn:qchundf}
$\psi (\undf) = \undf$;

\item\label{dfn:qchwtc}
if $\psi(x) \in Q'$, then
$\wt \parens[\big]{ \psi(x) } = \wt(x)$,
$\qKoec_i \parens[\big]{ \psi(x) } = \qKoec_i (x)$, 
and\linebreak $\qKofc_i \parens[\big]{ \psi(x) } = \qKofc_i (x)$;

\item\label{dfn:qchqKoe}
if $\qKoe_i (x) \in Q$ and $\psi(x), \psi \parens[\big]{ \qKoe_i (x) } \in Q'$, then
$\psi \parens[\big]{ \qKoe_i (x) } = \qKoe_i \parens[\big]{ \psi (x) }$;

\item\label{dfn:qchqKof}
if $\qKof_i (x) \in Q$ and $\psi(x), \psi \parens[\big]{ \qKof_i (x) } \in Q'$, then
$\psi \parens[\big]{ \qKof_i (x) } = \qKof_i \parens[\big]{ \psi (x) }$;
\end{enumerate}
for $x \in Q$ and $i \in I$.

If $\psi$ is also bijective, then it is called a \dtgterm{quasi-crystal isomorphism}.
We say that $\qcrstQ$ and $\qcrstQ'$ are \dtgterm{isomorphic} if there exists a quasi-crystal isomorphism between $\qcrstQ$ and $\qcrstQ'$.
\end{dfn}

In terms of quasi-crystal graphs, we have that two quasi-crystals $\qcrstQ$ and $\qcrstQ'$ of the same type are isomorphic if and only if there exists a graph isomorphism between $\Gamma_{\qcrstQ}$ and $\Gamma_{\qcrstQ'}$ that preserves vertex weights and edge labels.

\section{Plactic monoids over quasi-crystals}
\label{sec:plmqc}

In this section, we present a construction of the plactic monoid associated to a seminormal quasi-crystal.
It is based on the well-known construction of the plactic monoid associated to a seminormal crystal, which has its roots in the work by Kashiwara and Nakashima\avoidcitebreak \cite{KN94} and is commonly found in the literature.
See for example\avoidcitebreak \cite{Lec07,CGM19}.

We first define the tensor product of seminormal quasi-crystals.
We then introduce quasi-crystal monoids for this tensor product and define a free quasi-crystal monoid associated to a seminormal quasi-crystal.
Finally, we show how the plactic monoid can be defined as a quotient of the free quasi-crystal monoid.

\subsection{Tensor product of quasi-crystals}
\label{subsec:qctp}

We present the notion of tensor product of seminormal quasi-crystals following the original notion of tensor product of crystals introduced by Kashiwara\avoidcitebreak \cite{Kas90,Kas91,Kas94}.

\begin{dfn}
\label{dfn:qctp}
Consider a root system $\Phi$ with weight lattice $\Lambda$ and index set $I$ for the simple roots $(\alpha_i)_{i \in I}$.
Let $\qcrstQ$ and $\qcrstQ'$ be seminormal quasi-crystals of type $\Phi$.
The \dtgterm{tensor product} of $\qcrstQ$ and $\qcrstQ'$ is a seminormal quasi-crystal $\qcrstQ \otimes \qcrstQ'$ of type $\Phi$ given as follows.
The underlying set is $Q \otimes Q'$ which denotes the Cartesian product $Q \times Q'$ whose ordered pairs are denoted by $x \otimes x'$ with $x \in Q$ and $x' \in Q'$.
The quasi-crystal structure is defined by
\begin{align*}
\wt (x \otimes x') &= \wt(x) + \wt(x'),
\displaybreak[0]\\
\qKoe_i (x \otimes x') &=
   \begin{cases}
      \qKoe_i (x) \otimes x' & \text{if $\qKofc_i (x) \geq \qKoec_i (x')$}\\
      x \otimes \qKoe_i (x') & \text{if $\qKofc_i (x) < \qKoec_i (x')$,}
   \end{cases}
\displaybreak[0]\\
\qKof_i (x \otimes x') &=
   \begin{cases}
      \qKof_i (x) \otimes x' & \text{if $\qKofc_i (x) > \qKoec_i (x')$}\\
      x \otimes \qKof_i (x') & \text{if $\qKofc_i (x) \leq \qKoec_i (x')$,}
   \end{cases}
\displaybreak[0]\\
\qKoec_i (x \otimes x') &= \max \set[\big]{ \qKoec_i (x), \qKoec_i (x') - \innerp[\big]{\wt (x)}{\alphav_i} },
\displaybreak[0]\\
\shortintertext{and}
\qKofc_i (x \otimes x') &= \max \set[\big]{ \qKofc_i (x) + \innerp[\big]{\wt(x')}{\alphav_i}, \qKofc_i (x') },
\end{align*}
where $x \dotimes \undf = \undf \dotimes x' = \undf$,
for $x \in Q$, $x' \in Q'$, and $i \in I$.
\end{dfn}

From the previous definition, it is immediate that $\qKoec_i (x \otimes x') = +\infty$ (or equivalently, $\qKofc_i (x \otimes x') = +\infty$) if and only if $\qKoec_i (x) = +\infty$ or $\qKoec_i (x') = +\infty$ (or equivalently, $\qKofc_i (x) = +\infty$ or $\qKofc_i (x') = +\infty$).
Thus, the tensor product of seminormal crystals is still a seminormal crystal.
Due to this observation, it is straightforward to adapt the proofs in\avoidcitebreak \cite[\S{}~4.4]{HK02} to check that the tensor product of seminormal quasi-crystals is a seminormal quasi-crystal and that the tensor product of seminormal quasi-crystals is associative, i.e., given seminormal quasi-crystals $\qcrstQ_1$, $\qcrstQ_2$, and $\qcrstQ_3$ of the same type, the quasi-crystals $(\qcrstQ_1 \otimes \qcrstQ_2) \otimes \qcrstQ_3$ and $\qcrstQ_1 \otimes (\qcrstQ_2 \otimes \qcrstQ_3)$ are isomorphic, whereas a quasi-crystal isomorphism is given by $(x_1 \otimes x_2) \otimes x_3 \mapsto x_1 \otimes (x_2 \otimes x_3)$.

\begin{exa}
\label{exa:qctpA32}
The crystal graph $\Gamma_{\crtA_3 \otimes \crtA_3}$ of the tensor product $\crtA_3 \otimes \crtA_3$ is the following.
\[
\begin{tikzpicture}[widecrystal,baseline=(33.base)]
  %
  \node (11) at (1, 3) {1 \otimes 1};
  \node (21) at (2, 3) {2 \otimes 1};
  \node (31) at (3, 3) {3 \otimes 1};
  \node (12) at (1, 2) {1 \otimes 2};
  \node (22) at (2, 2) {2 \otimes 2};
  \node (32) at (3, 2) {3 \otimes 2};
  \node (13) at (1, 1) {1 \otimes 3};
  \node (23) at (2, 1) {2 \otimes 3};
  \node (33) at (3, 1) {3 \otimes 3};
  \path (11) edge node {1} (21)
        (21) edge node {1} (22)
        (21) edge node {2} (31)
        (31) edge node {1} (32)
        (12) edge node {2} (13)
        (22) edge node {2} (32)
        (32) edge node {2} (33)
        (13) edge node {1} (23);
\end{tikzpicture}
\]
where $\wt (x \otimes y) = \wt(x) + \wt(y)$, for $x, y \in A_3$.
\end{exa}

\begin{exa}
\label{exa:qctpC22}
The crystal graph $\Gamma_{\crtC_2 \otimes \crtC_2}$ of the tensor product $\crtC_2 \otimes \crtC_2$ is the following.
\[
\begin{tikzpicture}[widecrystal,baseline=(b1b1.base)]
  %
  \node (11)  at (1, 4) {1 \otimes 1};
  \node (21)  at (2, 4) {2 \otimes 1};
  \node (b21) at (3, 4) {\wbar{2} \otimes 1};
  \node (b11) at (4, 4) {\wbar{1} \otimes 1};
  \node (12)  at (1, 3) {1 \otimes 2};
  \node (22)  at (2, 3) {2 \otimes 2};
  \node (b22) at (3, 3) {\wbar{2} \otimes 2};
  \node (b12) at (4, 3) {\wbar{1} \otimes 2};
  \node (1b2)  at (1, 2) {1 \otimes \wbar{2}};
  \node (2b2)  at (2, 2) {2 \otimes \wbar{2}};
  \node (b2b2) at (3, 2) {\wbar{2} \otimes \wbar{2}};
  \node (b1b2) at (4, 2) {\wbar{1} \otimes \wbar{2}};
  \node (1b1)  at (1, 1) {1 \otimes \wbar{1}};
  \node (2b1)  at (2, 1) {2 \otimes \wbar{1}};
  \node (b2b1) at (3, 1) {\wbar{2} \otimes \wbar{1}};
  \node (b1b1) at (4, 1) {\wbar{1} \otimes \wbar{1}};
  \path (11) edge node {1} (21)
        (21) edge node {1} (22)
        (21) edge node {2} (b21)
        (b21) edge node {1} (b11)
        (b11) edge node {1} (b12)
        (12) edge node {2} (1b2)
        (22) edge node {2} (b22)
        (b22) edge node {2} (b2b2)
        (b12) edge node {2} (b1b2)
        (1b2) edge node {1} (2b2)
        (2b2) edge node {1} (2b1)
        (b2b2) edge node {1} (b1b2)
        (b1b2) edge node {1} (b1b1)
        (2b1) edge node {2} (b2b1);
\end{tikzpicture}
\]
where $\wt (x \otimes y) = \wt(x) + \wt(y)$, for $x, y \in C_2$.
\end{exa}

We chose to base our definition of tensor product of seminormal quasi-crystals on the original notion of tensor product of crystals introduced by Kashiwara\avoidcitebreak \cite{Kas90,Kas91,Kas94}.
There is another convention for the tensor product of crystals which is opposite to the original one (see for example\avoidcitebreak \cite[\S{}~2.3]{BS17}).
The impact of this choice in the subsequent sections is that the monoids considered are anti-isomorphic to the ones that would be obtained if we chose to adopt the other convention.

\subsection{Quasi-crystal monoids for the tensor product}
\label{subsec:qcmtp}

We first introduce the fundamental concept relating quasi-crystals and monoids with respect to the tensor product.

\begin{dfn}
\label{dfn:qcmtp}
Let $\Phi$ be a root system with weight lattice $\Lambda$ and index set $I$ for the simple roots $(\alpha_i)_{i \in I}$.
A \dtgterm{$\otimes$-quasi-crystal monoid} $\qcrstM$ of type $\Phi$ consists of a set $M$ together with maps ${\wt} : M \to \Lambda$, $\qKoe_i, \qKof_i : M \to M \sqcup \set{ \undf }$, $\qKoec_i, \qKofc_i : M \to \Z \cup \set{ +\infty }$ ($i \in I$) and a binary operation ${\cdot} : M \times M \to M$ satisfying the following conditions:
\begin{enumerate}
\item\label{dfn:qcmtpqc}
$M$ together with $\wt$, $\qKoe_i$, $\qKof_i$, $\qKoec_i$ and $\qKofc_i$ ($i \in I$) forms a seminormal quasi-crystal of type $\Phi$;

\item\label{dfn:qcmtpmon}
$M$ together with $\cdot$ forms a monoid;

\item\label{dfn:qcmtphom}
the map $M \otimes M \to M$, given by $x \otimes y \mapsto x \cdot y$ for $x, y \in M$, induces a quasi-crystal homomorphism from $\qcrstM \otimes \qcrstM$ to $\qcrstM$.
\end{enumerate}
\end{dfn}

In a $\otimes$-quasi-crystal monoid $\qcrstM$ the interaction between the quasi-crystal structure and the binary operation satisfies rules similar to those satisfied by the quasi-crystal structure of a tensor product.
That is,
\begin{align*}
\wt (x y) &= \wt(x) + \wt(y),
\displaybreak[0]\\
\qKoe_i (x y) &=
   \begin{cases}
      \qKoe_i (x) \cdot y & \text{if $\qKofc_i (x) \geq \qKoec_i (y)$}\\
      x \cdot \qKoe_i (y) & \text{if $\qKofc_i (x) < \qKoec_i (y)$,}
   \end{cases}
\displaybreak[0]\\
\qKof_i (x y) &=
   \begin{cases}
      \qKof_i (x) \cdot y & \text{if $\qKofc_i (x) > \qKoec_i (y)$}\\
      x \cdot \qKof_i (y) & \text{if $\qKofc_i (x) \leq \qKoec_i (y)$,}
   \end{cases}
\displaybreak[0]\\
\qKoec_i (x y) &= \max \set[\big]{ \qKoec_i (x), \qKoec_i (y) - \innerp[\big]{\wt (x)}{\alphav_i} },
\displaybreak[0]\\
\shortintertext{and}
\qKofc_i (x y) &= \max \set[\big]{ \qKofc_i (x) + \innerp[\big]{\wt(y)}{\alphav_i}, \qKofc_i (y) },
\end{align*}
for $x, y \in M$ and $i \in I$.

When $M$ together with $\wt$, $\qKoe_i$, $\qKof_i$, $\qKoec_i$ and $\qKofc_i$ ($i \in I$) forms a seminormal crystal, we say that $\qcrstM$ is a \dtgterm{$\otimes$-crystal monoid}.

Given a seminormal quasi-crystal $\qcrstQ$, as the tensor product $\otimes$ is an associative operation, denote by $\qcrstQ^{\otimes k}$ the tensor product of $k$ copies of $\qcrstQ$ and by $Q^{\otimes k}$ its underlying set.
the set
\[
  Q^{\otimes \omega} = \bigcup_{k \geq 0} Q^{\otimes k}
\]
inherits a $\otimes$-quasi-crystal monoid structure, which gives rise to the following definition.

\begin{dfn}
\label{dfn:fqcmtp}
Let $\qcrstQ$ be a seminormal quasi-crystal.
The \dtgterm{free $\otimes$-quasi-crystal monoid} $\ftpqcm{\qcrstQ}$ over $\qcrstQ$ is a $\otimes$-quasi-crystal monoid of the same type as $\qcrstQ$ consisting of the set $Q^*$ of all words over $Q$, the usual concatenation of words, and quasi-crystal structure maps defined as follows.
For $i \in I$, set
\[
\wt (\ew) = 0,
\quad
\qKoe_i (\ew) = \qKof_i (\ew) = \undf,
\quad \text{and} \quad
\qKoec_i (\ew) = \qKofc_i (\ew) = 0,
\]
and for $u, v \in Q^*$, set
\begin{align*}
\wt (uv) &= \wt(u) + \wt(v),
\displaybreak[0]\\
\qKoe_i (uv) &=
  \begin{cases}
    \qKoe_i (u) v & \text{if $\qKofc_i (u) \geq \qKoec_i (v)$}\\
    u \qKoe_i (v) & \text{if $\qKofc_i (u) < \qKoec_i (v)$,}
  \end{cases}
\displaybreak[0]\\
\qKof_i (uv) &=
  \begin{cases}
    \qKof_i (u) v & \text{if $\qKofc_i (u) > \qKoec_i (v)$}\\
    u \qKof_i (v) & \text{if $\qKofc_i (u) \leq \qKoec_i (v)$,}
  \end{cases}
\displaybreak[0]\\
\qKoec_i (uv) &= \max \set[\big]{ \qKoec_i (u), \qKoec_i (v) - \innerp[\big]{\wt (u)}{\alphav_i} },
\displaybreak[0]\\
\shortintertext{and}
\qKofc_i (uv) &= \max \set[\big]{ \qKofc_i (u) + \innerp[\big]{\wt(v)}{\alphav_i}, \qKofc_i (v) },
\end{align*}
where $u \undf = \undf v = \undf$.
\end{dfn}

Notice that we explicitly gave the values of the quasi-crystal structure maps of $\ftpqcm{\qcrstQ}$ on the empty word $\ew$, on letters the values follow from the quasi-crystal structure maps of $\qcrstQ$, and on a word of the form $uv$ they depend only on their values on $u$ and $v$.
Thus, the definition of the quasi-crystal structure above is not circular.
Moreover, we can obtain the values of the quasi-crystal structure maps on a nonempty word based only on their values on its letters.
For the weight map, we have that
\[ \wt (x_1 \ldots x_m) = \wt(x_1) + \cdots + \wt(x_m). \]
for any $x_1, \ldots, x_m \in Q$.
For $\qKoe_i$, $\qKof_i$, $\qKoec_i$, and $\qKofc_i$ ($i \in I$), we present a method called \dtgterm{signature rule}.

Let $B_0$ denote the bicyclic monoid with a zero element added.
We have the following presentation
\[
  B_0 = \pres[\big]{ 0, {-}, {+} \given ({+} {-}, \ew) },
\]
where we omitted the relations $(x0, 0)$ and $(0x, 0)$, for $x \in \set{ 0, {-}, {+} }$, for the sake of simplicity.

Let $\qcrstQ$ be a seminormal quasi-crystal.
For each $i \in I$, define a map $\tpsgn_i : Q^* \to B_0$ by
\[
  \tpsgn_i (w) =
  \begin{cases}
    0 & \text{if $\qKoec_i (w) = +\infty$}\\
    {-^{\qKoec_i (w)}} {+^{\qKofc_i (w)}} & \text{otherwise,}
  \end{cases}
\]
for $w \in Q^*$.
The map $\tpsgn_i$ is called the \dtgterm{$i$-signature map} for the tensor product $\otimes$.

The $i$-signature map $\tpsgn_i$ is a monoid homomorphism.
Thus, given a word $w = x_1 \ldots x_m$ with $x_1, \ldots, x_m \in Q$, we have that
\[ \tpsgn_i (w) = \tpsgn_i (x_1 \ldots x_m) = \tpsgn_i (x_1) \cdots \tpsgn_i (x_m). \]
If $\tpsgn_i (w) = 0$, then $\qKoec_i (w) = \qKofc_i (w) = +\infty$ which implies $\qKoe_i (w) = \qKof_i (w) = \undf$.
Otherwise,
$\tpsgn_i (w) = {-^{a}} {+^{b}}$,
for some $a, b \in \Z_{\geq 0}$.
Then, $\qKoec_i (w) = a$ and $\qKofc_i (w) = b$.
If $a \geq 1$, then
$\qKoe_i (w) = x_1 \ldots x_{p-1} \qKoe_i (x_p) x_{p+1} \ldots x_m$,
where $x_p$ originates the right-most symbol $-$ in $\tpsgn_i (w) = {-^{a-1}} {-} {+^b}$.
If $b \geq 1$, then
$\qKof_i (w) = x_1 \ldots x_{q-1} \qKof_i (x_q) x_{q+1} \ldots x_m$,
where $x_q$ originates the left-most symbol $+$ in $\tpsgn_i (w) = {-^a} {+} {+^{b-1}}$.

\begin{exa}
\label{exa:tpsgnr}
Consider the standard crystal $\crtA_4$ of type $\tA_4$.
For $i \in \set{1,2,3}$, we have that $\tpsgn_i (i) = {+}$, $\tpsgn_i (i+1) = {-}$, and $\tpsgn_i (x) = \ew$, for any $x \in A_4 \setminus \set{i,i+1}$.

We compute $\Koe_i$, $\Kof_i$, $\Koec_i$, and $\Kofc_i$ on $w = 3 1 3 1 2 2 4 1 4$, for $i \in \set{1,2,3}$, using the signature rule.
To keep track to which element originates each $-$ and $+$ we write a subscript with the position of the letter, this is just an auxiliary notation and the binary operation of $B_0$ should be applied ignoring the subscripts.

For $i=1$, we have that
\[ \tpsgn_1 (w) = \tpsgn_1 (3 1 3 1 2 2 4 1 4) = {+_2} {+_4} {-_5} {-_6} {+_8} = {+_8}. \]
Thus, $\Koec_1 (w) = 0$ which implies that $\Koe_1$ is undefined on $w$, and $\Kofc_1 (w) = 1$ where $\Kof_1 (w)$ is obtain  by applying $\Kof_1$ to the letter in the 8th position, resulting in
$\Kof_1 (w) = 3 1 3 1 2 2 4 2 4$.

For $i=2$, we have that
\[ \tpsgn_2 (w) = \tpsgn_2 (3 1 3 1 2 2 4 1 4) = {-_1} {-_3} {+_5} {+_6}. \]
Therefore, $\Koec_2 (w) = 2$, $\Koe_2 (w)$ is obtained by applying $\Koe_2$ to the 3rd letter resulting in $\Koe_2 (w) = 3 1 2 1 2 2 4 1 4$, $\Kof_2 (w) = 2$, and $\Kof_2 (w) = 3 1 3 1 3 2 4 1 4$ results from applying $\Kof_2$ to the 5th letter.

For $i=3$, we have that
\[ \tpsgn_3 (w) = \tpsgn_3 (3 1 3 1 2 2 4 1 4) = {+_1} {+_3} {-_7} {-_9} = \ew, \]
which implies that $\Koec_3 (w) = \Kofc_3 (w) = 0$ and $\Koe_3 (w) = \Kof_3 (w) = \undf$, that is, $\Koe_3$ and $\Kof_3$ are undefined on $w$.
\end{exa}

\subsection{Plactic monoids}
\label{subsec:plm}

Given a seminormal quasi-crystal $\qcrstQ$ and an element $x \in Q$, we denote by $\qcrstQ (x)$ the seminormal quasi-crystal whose quasi-crystal graph corresponds to the connected component $\Gamma_{\qcrstQ} (x)$ of $\Gamma_{\qcrstQ}$ containing $x$.
We call $\qcrstQ (x)$ the \dtgterm{connected component} of $\qcrstQ$ containing $x$, and denote its underlying set by $Q(x)$.
Thus, the quasi-crystal structure of $\qcrstQ (x)$ corresponds to the restriction of the quasi-crystal structure of $\qcrstQ$ to $Q(x)$.

\begin{dfn}
\label{dfn:plco}
Let $\qcrstQ$ be a seminormal quasi-crystal.
The \dtgterm{plactic congruence} on $\ftpqcm{\qcrstQ}$ is a relation $\plco$ on $Q^*$ given as follows.
For $u, v \in Q^*$, $u \plco v$ if and only if there exists a quasi-crystal isomorphism $\psi : \ftpqcm{\qcrstQ} (u) \to \ftpqcm{\qcrstQ} (v)$ such that $\psi (u) = v$.
\end{dfn}

Due to the following result, we can define the plactic monoid over a quasi-crystal.

\begin{prop}
\label{prop:plcomoncong}
Let $\qcrstQ$ be a seminormal quasi-crystal.
Then, the plactic congruence $\plco$ on $\ftpqcm{\qcrstQ}$ is a monoid congruence on the free monoid $Q^*$.
\end{prop}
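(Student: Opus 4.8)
The plan is to show that $\plco$ is an equivalence relation and that it is left and right compatible with concatenation; together these give that $\plco$ is a monoid congruence on $Q^*$. First I would dispatch the equivalence-relation axioms directly from the definition: reflexivity comes from the identity isomorphism of $\ftpqcm{\qcrstQ}(u)$ fixing $u$; symmetry from inverting a witnessing isomorphism; transitivity from composing two witnessing isomorphisms. Since inverses and composites of quasi-crystal isomorphisms are again quasi-crystal isomorphisms, this part is routine.

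The substance lies in compatibility. Fix $u \plco v$ witnessed by $\psi : \ftpqcm{\qcrstQ}(u) \to \ftpqcm{\qcrstQ}(v)$ with $\psi(u) = v$, and fix $w \in Q^*$; write $B = \ftpqcm{\qcrstQ}(u)$, $B' = \ftpqcm{\qcrstQ}(v)$, and $C = \ftpqcm{\qcrstQ}(w)$. I would proceed in three steps. Step 1: because $\ftpqcm{\qcrstQ}$ is a $\otimes$-quasi-crystal monoid, the concatenation map $\mu : Q^* \otimes Q^* \to Q^*$, $a \otimes b \mapsto ab$, is a quasi-crystal homomorphism (condition \itmref{dfn:qcmtphom}). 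Comparing the tensor rule of \comboref{Definition}{dfn:qctp} with the word rule of \comboref{Definition}{dfn:fqcmtp}, the operators $\qKoe_i, \qKof_i$ select the same branch on $u \otimes w$ as on $uw = \mu(u \otimes w)$, and the quantities $\wt, \qKoec_i, \qKofc_i$ agree; hence $\mu$ intertwines all defined operators and carries the connected component $(B \otimes C)(u \otimes w)$ onto $\ftpqcm{\qcrstQ}(uw)$. Since the quasi-Kashiwara operators preserve the length of each tensor factor, every element of that component has left factor of length $\abs{u}$, so $\mu$ is injective there; thus it restricts to an isomorphism $(B \otimes C)(u \otimes w) \to \ftpqcm{\qcrstQ}(uw)$, and likewise $(B' \otimes C)(v \otimes w) \to \ftpqcm{\qcrstQ}(vw)$. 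Step 2: I would check that $\psi \otimes \id : B \otimes C \to B' \otimes C$, $b \otimes c \mapsto \psi(b) \otimes c$, is a quasi-crystal isomorphism. Because a quasi-crystal isomorphism preserves $\wt$, $\qKoec_i$, and $\qKofc_i$ (condition \itmref{dfn:qchwtc}), the branch selected in the tensor rule is identical for $b \otimes c$ and for $\psi(b) \otimes c$, and the derived values of $\wt, \qKoec_i, \qKofc_i$ coincide; so $\psi \otimes \id$ is a homomorphism (conditions \itmref{dfn:qchqKoe} and \itmref{dfn:qchqKof}) with inverse $\psi^{-1} \otimes \id$, hence an isomorphism, and it sends $u \otimes w \mapsto v \otimes w$. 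Step 3: composing the inverse of the first isomorphism of Step 1, then $\psi \otimes \id$, then the second isomorphism of Step 1 produces an isomorphism $\ftpqcm{\qcrstQ}(uw) \to \ftpqcm{\qcrstQ}(vw)$ sending $uw$ to $vw$, giving $uw \plco vw$; using $\id \otimes \psi$ in place of $\psi \otimes \id$ yields $wu \plco wv$.

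To conclude, an equivalence relation that is both left and right compatible with the monoid operation is a monoid congruence, which finishes the proof.

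I expect the main obstacle to be Steps 1 and 2 jointly, both of which reduce to a single observation: the branch chosen in the tensor-product rules (the comparisons $\qKofc_i \geq \qKoec_i$ and $\qKofc_i > \qKoec_i$) depends only on $\wt, \qKoec_i, \qKofc_i$ of the two factors, which $\psi$ preserves and which $\id$ leaves untouched. The one point requiring genuine care is confirming that $\mu$ is injective on a connected component; the key fact there is that the quasi-Kashiwara operators act on a single letter of a single factor and so preserve the length of the left factor, pinning down the split point and making the homomorphism $\mu$ a genuine isomorphism on components rather than merely a surjection.
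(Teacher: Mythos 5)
Your proof is correct, and it follows what is essentially the intended argument: the paper itself states this proposition without proof (deferring to the cited literature, where the crystal-theoretic version is standard), and that standard proof proceeds exactly as yours does --- tensor the witnessing isomorphism with an identity map, and identify connected components of $\ftpqcm{\qcrstQ}(u) \otimes \ftpqcm{\qcrstQ}(w)$ with components of $\ftpqcm{\qcrstQ}$ via the concatenation map, whose injectivity on components comes from length preservation just as you observe. The only step worth making explicit is that your final composition uses the inverse of a bijective quasi-crystal homomorphism being again a homomorphism; this does hold here, since seminormality together with the preservation of $\qKoec_i$ and $\qKofc_i$ required by \comboref{Definition}{dfn:qch} ensures that definedness of the quasi-Kashiwara operators transfers across $\psi$ and across your restricted concatenation maps.
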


\begin{dfn}
\label{dfn:placmon}
Let $\qcrstQ$ be a seminormal quasi-crystal, and let $\plco$ be the plactic congruence on $\ftpqcm{\qcrstQ}$.
The quotient monoid $Q^* / {\plco}$ is called the \dtgterm{plactic monoid} associated to $\qcrstQ$ and is denoted by $\plac (\qcrstQ)$.
\end{dfn}

Observe that the quasi-crystal structure of $\ftpqcm{\qcrstQ}$ induces a quasi-crystal structure on $\plac (\qcrstQ)$, since if $u \plco v$, then $\wt(u) = \wt(v)$, $\qKoec_i (u) = \qKoec_i (v)$, $\qKofc_i (u) = \qKofc_i (v)$, $\qKoe_i (u) \plco \qKoe_i (v)$ whenever $\qKoe_i$ is defined on $u$, and $\qKof_i (u) \plco \qKof_i (v)$ whenever $\qKof_i$ is defined on $v$ ($i \in I$).
Although $\plac(\qcrstQ)$ is a $\otimes$-quasi-crystal monoid, we simply call it the plactic monoid, because we are mainly interested in studying it as a monoid.
However, we will be constantly considering its quasi-crystal structure, as it plays a fundamental role in the construction of $\plac(\qcrstQ)$, and consequently, in its properties.

For type $\tAn$, we have that the plactic monoid $\plac (\crtAn)$ is anti-isomorphic to the classical plactic monoid introduced by Lascoux and Sch\"{u}tzenberger\avoidcitebreak \cite{LS81}.

\begin{exa}
\label{exa:plA3}
Consider the standard crystal $\crtA_3$ of type $\tA_3$.
The connected component $\Gamma_{\ftpqcm{\crtA_3}} (112)$
\[
\begin{tikzpicture}[widecrystal,baseline=(223.base)]
  %
  \node (112) at (1, 1.5) {112};
  \node (212) at (2, 2) {212};
  \node (312) at (3, 2) {312};
  \node (313) at (4, 2) {313};
  \node (113) at (2, 1) {113};
  \node (213) at (3, 1) {213};
  \node (223) at (4, 1) {223};
  \node (323) at (5, 1.5) {323};
  \path (112) edge node {1} (212)
        (212) edge node {2} (312)
        (312) edge node {2} (313)
        (313) edge node {1} (323)
        (112) edge[swap] node {2} (113)
        (113) edge[swap] node {1} (213)
        (213) edge[swap] node {1} (223)
        (223) edge[swap] node {2} (323);
\end{tikzpicture}
\]
and the connected component $\Gamma_{\ftpqcm{\crtA_3}} (121)$
\[
\begin{tikzpicture}[widecrystal,baseline=(223.base)]
  %
  \node (112) at (1, 1.5) {121};
  \node (212) at (2, 2) {122};
  \node (312) at (3, 2) {132};
  \node (313) at (4, 2) {133};
  \node (113) at (2, 1) {131};
  \node (213) at (3, 1) {231};
  \node (223) at (4, 1) {232};
  \node (323) at (5, 1.5) {233};
  \path (112) edge node {1} (212)
        (212) edge node {2} (312)
        (312) edge node {2} (313)
        (313) edge node {1} (323)
        (112) edge[swap] node {2} (113)
        (113) edge[swap] node {1} (213)
        (213) edge[swap] node {1} (223)
        (223) edge[swap] node {2} (323);
\end{tikzpicture}
\]
are isomorphic.
For instance, we have that $112 \plco 121$, $212 \plco 122$, $223 \plco 232$, and $323 \plco 233$.
\end{exa}

\begin{exa}
\label{exa:plC3}
Consider the standard crystal $\crtC_3$ of type $\tC_3$.
The connected component $\Gamma_{\ftpqcm{\crtC_3}} \parens[\big]{ 12\wbar{2} }$
\[
  12\wbar{2} \lbedge{1} 12\wbar{1} \lbedge{2} 13\wbar{1} \lbedge{3} 1\wbar{3}\,\wbar{1} \lbedge{2} 1\wbar{2}\,\wbar{1} \lbedge{1} 2\wbar{2}\,\wbar{1}
\]
is isomorphic to the connected component $\Gamma_{\ftpqcm{\crtC_3}} (1)$, described in \comboref{Example}{exa:qctCn}.
For instance, we get that 
$12\wbar{2} \plco 1$,
$13\wbar{1} \plco 3$, 
and $2\wbar{2}\,\wbar{1} \plco \wbar{1}$.
\end{exa}

\section{Hypoplactic monoids over quasi-crystals}
\label{sec:hypomqc}

In this section, we present a construction of the hypoplactic monoid associated to a seminormal quasi-crystal.
It is analogous to the construction described in \comboref{Section}{sec:plmqc}.
For a detailed study of this construction and for the proofs of all results in this section, see\avoidcitebreak \cite{CGM23quasi-crystals}.

We first define the quasi-tensor product of seminormal quasi-crystals.
We then introduce quasi-crystal monoids and free quasi-crystal monoids for the quasi-tensor product.
Finally, we show how the plactic monoid can be defined as a quotient of the free quasi-crystal monoid.

\subsection{Quasi-tensor product of quasi-crystals}
\label{subsec:qcqtp}

We present the notion of quasi-tensor product of seminormal quasi-crystals.
A detailed study of this notion can be found in\avoidcitebreak \cite[\S{}~5]{CGM23quasi-crystals}.

\begin{dfn}
\label{dfn:qcqtp}
Consider a root system $\Phi$ with weight lattice $\Lambda$ and index set $I$ for the simple roots $(\alpha_i)_{i \in I}$.
Let $\qcrstQ$ and $\qcrstQ'$ be seminormal quasi-crystals of type $\Phi$.
The \dtgterm{inverse-free quasi-tensor product of $\qcrstQ$ and $\qcrstQ'$}, or simply the \dtgterm{quasi-tensor product of $\qcrstQ$ and $\qcrstQ'$}, is the seminormal quasi-crystal $\qcrstQ \dotimes \qcrstQ'$ given as follows.
The underlying set is $Q \dotimes Q'$ which consists of the Cartesian product $Q \times Q'$ whose ordered pairs are denoted by $x \dotimes x'$ with $x \in Q$ and $x' \in Q'$.
The quasi-crystal structure is defined by
\[
\wt (x \dotimes x') = \wt(x) + \wt(x'),
\]
and
\begin{enumerate}
\item\label{thm:qcqtpif}
if $\qKofc_i (x) > 0$ and $\qKoec_i (x') > 0$, then
\[
\qKoe_i (x \dotimes x') = \qKof_i (x \dotimes x') = \undf
\quad \text{and} \quad
\qKoec_i (x \dotimes x') = \qKofc_i (x \dotimes x') = {+\infty};
\]

\item\label{thm:qcqtpot}
otherwise,
\begin{align*}
\qKoe_i (x \dotimes x') &=
   \begin{cases}
      \qKoe_i (x) \dotimes x' & \text{if $\qKofc_i (x) \geq \qKoec_i (x')$}\\
      x \dotimes \qKoe_i (x') & \text{if $\qKofc_i (x) < \qKoec_i (x')$,}
   \end{cases}
\displaybreak[0]\\
\qKof_i (x \dotimes x') &=
   \begin{cases}
      \qKof_i (x) \dotimes x' & \text{if $\qKofc_i (x) > \qKoec_i (x')$}\\
      x \dotimes \qKof_i (x') & \text{if $\qKofc_i (x) \leq \qKoec_i (x')$,}
   \end{cases}
\displaybreak[0]\\
\qKoec_i (x \dotimes x') &= \max \set[\big]{ \qKoec_i (x), \qKoec_i (x') - \innerp[\big]{\wt (x)}{\alphav_i} },
\displaybreak[0]\\
\shortintertext{and}
\qKofc_i (x \dotimes x') &= \max \set[\big]{ \qKofc_i (x) + \innerp[\big]{\wt(x')}{\alphav_i}, \qKofc_i (x') },
\end{align*}
where $x \dotimes \undf = \undf \dotimes x' = \undf$;
\end{enumerate}
for $x \in Q$, $x' \in Q'$, and $i \in I$.
\end{dfn}

The quasi-tensor product of seminormal quasi-crystals is associative, that is, given seminormal quasi-crystals $\qcrstQ_1$, $\qcrstQ_2$, and $\qcrstQ_3$ of the same type, the quasi-crystals $(\qcrstQ_1 \dotimes \qcrstQ_2) \dotimes \qcrstQ_3$ and $\qcrstQ_1 \dotimes (\qcrstQ_2 \dotimes \qcrstQ_3)$ are isomorphic, whereas a quasi-crystal isomorphism is given by $(x_1 \dotimes x_2) \dotimes x_3 \mapsto x_1 \dotimes (x_2 \dotimes x_3)$.

Observe that the quasi-tensor product of seminormal crystals may not be a seminormal crystal.
In fact, given a crystal $\qcrstQ$, if $x \in Q$ and $i \in I$ are such that $\Kof_i$ is defined on $x$, then $\Kofc_i (x) > 0$ and $\Koec_i \parens[\big]{ \Kof_i (x) } = \Koec_i (x) + 1 > 0$, which implies that $\qKoec_i \parens[\big]{ x \dotimes \Kof_i (x) } = +\infty$, and so $\qcrstQ \dotimes \qcrstQ$ is not a crystal.

Comparing the notions of tensor product (\comboref{Definition}{dfn:qctp}) and quasi-tensor product, we have the following observation.

\begin{rmk}
\label{rmk:qctpqtp}
Let $\qcrstQ$ and $\qcrstQ'$ be seminormal quasi-crystals of the same type, and let $x \in Q$ and $x' \in Q'$.
The weight maps of $\qcrstQ \otimes \qcrstQ'$ and $\qcrstQ \dotimes \qcrstQ'$ coincide, i.e., $\wt (x \otimes x') = \wt (x \dotimes x')$.
If $\qKoec_i (x \dotimes x') \neq +\infty$ (or equivalently, $\qKofc_i (x \dotimes x') \neq +\infty$), then $\qKoec_i (x \dotimes x') = \qKoec_i (x \otimes x')$ and $\qKofc_i (x \dotimes x') = \qKofc_i (x \otimes x')$.
If $\qKoe_i$ (or $\qKof_i$) is defined on $x \dotimes x'$, then $\qKoe_i$ (resp., $\qKof_i$) is defined on $x \otimes x'$; moreover, if $\qKoe_i (x \dotimes x') = y \dotimes y'$ (or $\qKof_i (x \dotimes x') = z \dotimes z'$), then $\qKoe_i (x \otimes x') = y \otimes y'$ (resp., $\qKof_i (x \otimes x') = z \otimes z'$).
\end{rmk}

\begin{exa}
\label{exa:qcqtpA32}
The quasi-crystal graph $\Gamma_{\crtA_3 \dotimes \crtA_3}$ of the quasi-tensor product $\crtA_3 \dotimes \crtA_3$ is the following.
\[
\begin{tikzpicture}[widecrystal,baseline=(33.base)]
  %
  \node (11) at (1, 3) {1 \dotimes 1};
  \node (21) at (2, 3) {2 \dotimes 1};
  \node (31) at (3, 3) {3 \dotimes 1};
  \node (12) at (1, 2) {1 \dotimes 2};
  \node (22) at (2, 2) {2 \dotimes 2};
  \node (32) at (3, 2) {3 \dotimes 2};
  \node (13) at (1, 1) {1 \dotimes 3};
  \node (23) at (2, 1) {2 \dotimes 3};
  \node (33) at (3, 1) {3 \dotimes 3};
  \path (11) edge node {1} (21)
        (21) edge node {1} (22)
        (21) edge node {2} (31)
        (31) edge node {1} (32)
        (12) edge [loop left] node {1} ()
        (12) edge node {2} (13)
        (22) edge node {2} (32)
        (32) edge node {2} (33)
        (13) edge node {1} (23)
        (23) edge [loop right] node {2} ();
\end{tikzpicture}
\]
where $\wt (x \dotimes y) = \wt(x) + \wt(y)$, for $x, y \in A_3$.
\end{exa}

\begin{exa}
\label{exa:qcqtpC22}
The quasi-crystal graph $\Gamma_{\qctC_2 \dotimes \qctC_2}$ of the quasi-tensor product $\qctC_2 \dotimes \qctC_2$ is the following.
\[
\begin{tikzpicture}[widecrystal,baseline=(b1b1.base)]
  %
  \node (11)  at (1, 4) {1 \dotimes 1};
  \node (21)  at (2, 4) {2 \dotimes 1};
  \node (b21) at (3, 4) {\wbar{2} \dotimes 1};
  \node (b11) at (4, 4) {\wbar{1} \dotimes 1};
  \node (12)  at (1, 3) {1 \dotimes 2};
  \node (22)  at (2, 3) {2 \dotimes 2};
  \node (b22) at (3, 3) {\wbar{2} \dotimes 2};
  \node (b12) at (4, 3) {\wbar{1} \dotimes 2};
  \node (1b2)  at (1, 2) {1 \dotimes \wbar{2}};
  \node (2b2)  at (2, 2) {2 \dotimes \wbar{2}};
  \node (b2b2) at (3, 2) {\wbar{2} \dotimes \wbar{2}};
  \node (b1b2) at (4, 2) {\wbar{1} \dotimes \wbar{2}};
  \node (1b1)  at (1, 1) {1 \dotimes \wbar{1}};
  \node (2b1)  at (2, 1) {2 \dotimes \wbar{1}};
  \node (b2b1) at (3, 1) {\wbar{2} \dotimes \wbar{1}};
  \node (b1b1) at (4, 1) {\wbar{1} \dotimes \wbar{1}};
  \path (11) edge node {1} (21)
        (21) edge node {1} (22)
        (21) edge node {2} (b21)
        (b21) edge node {1} (b11)
        (b11) edge node {1} (b12)
        (12) edge node {2} (1b2)
        (22) edge node {2} (b22)
        (b22) edge node {2} (b2b2)
        (b12) edge node {2} (b1b2)
        (1b2) edge node {1} (2b2)
        (2b2) edge node {1} (2b1)
        (b2b2) edge node {1} (b1b2)
        (b1b2) edge node {1} (b1b1)
        (2b1) edge node {2} (b2b1)
        (12) edge [loop left] node {1} ()
        (b22) edge [loop right] node {1} ()
        (2b2) edge [loop right] node {2} ()
        (1b1) edge [loop left] node {1} ()
        (b2b1) edge [loop right] node {1} ();
\end{tikzpicture}
\]
where $\wt (x \dotimes y) = \wt(x) + \wt(y)$, for $x, y \in C_2$.
\end{exa}

\subsection{Quasi-crystal monoids for the quasi-tensor product}
\label{subsec:qcmqtp}

We first introduce the fundamental concept relating quasi-crystals and monoids with respect to the quasi-tensor product.

\begin{dfn}
\label{dfn:qcmqtp}
Let $\Phi$ be a root system with weight lattice $\Lambda$ and index set $I$ for the simple roots $(\alpha_i)_{i \in I}$.
A \dtgterm{$\dotimes$-quasi-crystal monoid} $\qcrstM$ of type $\Phi$ consists of a set $M$ together with maps ${\wt} : M \to \Lambda$, $\qKoe_i, \qKof_i : M \to M \sqcup \set{ \undf }$, $\qKoec_i, \qKofc_i : M \to \Z \cup \set{ +\infty }$ ($i \in I$) and a binary operation ${\cdot} : M \times M \to M$ satisfying the following conditions:
\begin{enumerate}
\item\label{dfn:qcmqtpqc}
$M$ together with $\wt$, $\qKoe_i$, $\qKof_i$, $\qKoec_i$ and $\qKofc_i$ ($i \in I$) forms a seminormal quasi-crystal of type $\Phi$;

\item\label{dfn:qcmqtpmon}
$M$ together with $\cdot$ forms a monoid;

\item\label{dfn:qcmqtphom}
the map $M \dotimes M \to M$, given by $x \dotimes y \mapsto x \cdot y$ for $x, y \in M$, induces a quasi-crystal homomorphism from $\qcrstM \dotimes \qcrstM$ to $\qcrstM$.
\end{enumerate}
\end{dfn}

In a $\dotimes$-quasi-crystal monoid $\qcrstM$ the interaction between the quasi-crystal structure and the binary operation satisfies rules similar to those satisfied by the quasi-crystal structure of a tensor product.
That is, for $x, y \in M$ and $i \in I$,
\[ \wt (x y) = \wt(x) + \wt(y); \]
if $\qKofc_i (x) > 0$ and $\qKoec_i (x') > 0$, then
\[
\qKoe_i (x y) = \qKof_i (x y) = \undf
\quad \text{and} \quad
\qKoec_i (x y) = \qKofc_i (x y) = {+\infty};
\]
otherwise,
\begin{align*}
\qKoe_i (x y) &=
   \begin{cases}
      \qKoe_i (x) \cdot y & \text{if $\qKofc_i (x) \geq \qKoec_i (y)$}\\
      x \cdot \qKoe_i (y) & \text{if $\qKofc_i (x) < \qKoec_i (y)$,}
   \end{cases}
\displaybreak[0]\\
\qKof_i (x y) &=
   \begin{cases}
      \qKof_i (x) \cdot y & \text{if $\qKofc_i (x) > \qKoec_i (y)$}\\
      x \cdot \qKof_i (y) & \text{if $\qKofc_i (x) \leq \qKoec_i (y)$,}
   \end{cases}
\displaybreak[0]\\
\qKoec_i (x y) &= \max \set[\big]{ \qKoec_i (x), \qKoec_i (y) - \innerp[\big]{\wt (x)}{\alphav_i} },
\displaybreak[0]\\
\shortintertext{and}
\qKofc_i (x y) &= \max \set[\big]{ \qKofc_i (x) + \innerp[\big]{\wt(y)}{\alphav_i}, \qKofc_i (y) },
\end{align*}

Given a seminormal quasi-crystal $\qcrstQ$, as the quasi-tensor product $\dotimes$ is an associative operation, denote by $\qcrstQ^{\dotimes k}$ the quasi-tensor product of $k$ copies of $\qcrstQ$ and by $Q^{\dotimes k}$ its underlying set.
the set
\[
  Q^{\dotimes \omega} = \bigcup_{k \geq 0} Q^{\dotimes k}
\]
inherits a $\dotimes$-quasi-crystal monoid structure, which gives rise to the following definition.

\begin{dfn}
\label{dfn:fqcmqtp}
Let $\qcrstQ$ be a seminormal quasi-crystal.
The \dtgterm{free $\dotimes$-quasi-crystal monoid} $\ftpqcm{\qcrstQ}$ over $\qcrstQ$ is a $\dotimes$-quasi-crystal monoid of the same type as $\qcrstQ$ consisting of the set $Q^*$ of all words over $Q$, the usual concatenation of words, and quasi-crystal structure maps defined as follows.
For $i \in I$, set
\[
\wt (\ew) = 0,
\quad
\qKoe_i (\ew) = \qKof_i (\ew) = \undf,
\quad \text{and} \quad
\qKoec_i (\ew) = \qKofc_i (\ew) = 0.
\]
For $u, v \in Q^*$, set
\[ \wt (uv) = \wt(u) + \wt(v); \]
if $\qKofc_i (u) > 0$ and $\qKoec_i (v) > 0$, set
\[
\qKoe_i (u v) = \qKof_i (u v) = \undf
\quad \text{and} \quad
\qKoec_i (u v) = \qKofc_i (u v) = +\infty;
\]
otherwise, set
\begin{align*}
\qKoe_i (uv) &=
  \begin{cases}
    \qKoe_i (u) v & \text{if $\qKofc_i (u) \geq \qKoec_i (v)$}\\
    u \qKoe_i (v) & \text{if $\qKofc_i (u) < \qKoec_i (v)$,}
  \end{cases}
\displaybreak[0]\\
\qKof_i (uv) &=
  \begin{cases}
    \qKof_i (u) v & \text{if $\qKofc_i (u) > \qKoec_i (v)$}\\
    u \qKof_i (v) & \text{if $\qKofc_i (u) \leq \qKoec_i (v)$,}
  \end{cases}
\displaybreak[0]\\
\qKoec_i (uv) &= \max \set[\big]{ \qKoec_i (u), \qKoec_i (v) - \innerp[\big]{\wt (u)}{\alphav_i} },
\displaybreak[0]\\
\shortintertext{and}
\qKofc_i (uv) &= \max \set[\big]{ \qKofc_i (u) + \innerp[\big]{\wt(v)}{\alphav_i}, \qKofc_i (v) };
\end{align*}
where $u \undf = \undf v = \undf$.
\end{dfn}

In a free $\dotimes$-quasi-crystal monoid $\fqtpqcm{\qcrstQ}$, we can obtain the values of the quasi-crystal structure maps on a nonempty word based only on their values on its letters, which are given by the quasi-crystal structure of $\qcrstQ$.
For the weight map, we have that
\[ \wt (x_1 \ldots x_m) = \wt(x_1) + \cdots + \wt(x_m). \]
for any $x_1, \ldots, x_m \in Q$.
For $\qKoe_i$, $\qKof_i$, $\qKoec_i$, and $\qKofc_i$ ($i \in I$), we describe a \dtgterm{signature rule} process.

Consider the zero monoid $Z_0$ with the following presentation
\[
  Z_0 = \pres[\big]{ 0, {-}, {+} \given ({+} {-}, 0) },
\]
where we omitted the relations $(x0, 0)$ and $(0x, 0)$, for $x \in \set{ 0, {-}, {+} }$, for the sake of simplicity.

Let $\qcrstQ$ be a seminormal quasi-crystal.
For each $i \in I$, define a map $\qtpsgn_i : Q^* \to Z_0$ by
\[
  \qtpsgn_i (w) =
  \begin{cases}
    0 & \text{if $\qKoec_i (w) = +\infty$}\\
    {-^{\qKoec_i (w)}} {+^{\qKofc_i (w)}} & \text{otherwise,}
  \end{cases}
\]
for $w \in Q^*$.
The map $\qtpsgn_i$ is called the \dtgterm{$i$-signature map} for the quasi-tensor product $\dotimes$.

The $i$-signature map $\qtpsgn_i$ is a monoid homomorphism.
Thus, given a word $w = x_1 \ldots x_m$ with $x_1, \ldots, x_m \in Q$, we have that
\[ \qtpsgn_i (w) = \qtpsgn_i (x_1 \ldots x_m) = \qtpsgn_i (x_1) \cdots \qtpsgn_i (x_m). \]
If $\qtpsgn_i (w) = 0$, then $\qKoec_i (w) = \qKofc_i (w) = +\infty$ which implies $\qKoe_i (w) = \qKof_i (w) = \undf$.
Otherwise,
$\qtpsgn_i (w) = {-^{a}} {+^{b}}$, for some $a, b \in \Z_{\geq 0}$.
Then, $\qKoec_i (w) = a$ and $\qKofc_i (w) = b$.
If $a \geq 1$, then
$\qKoe_i (w) = x_1 \ldots x_{p-1} \qKoe_i (x_p) x_{p+1} \ldots x_m$,
where $x_p$ originates the right-most symbol $-$ in $\qtpsgn_i (w) = {-^{a-1}} {-} {+^b}$.
If $b \geq 1$, then
$\qKof_i (w) = x_1 \ldots x_{q-1} \qKof_i (x_q) x_{q+1} \ldots x_m$,
where $x_q$ originates the left-most symbol $+$ in $\qtpsgn_i (w) = {-^a} {+} {+^{b-1}}$.

\begin{exa}
\label{exa:qtpsgnr}
Consider the standard crystal $\crtC_3$ of type $\tC_3$.
For $i \in \set{1,2,3}$, we have that $\qtpsgn_i (i) = \qtpsgn_i \parens[\big]{ \wbar{i+1} } = {+}$, $\tpsgn_i (i+1) = \qtpsgn_i \parens[\big]{ \wbar{i} } = {-}$, and $\qtpsgn_i (x) = \ew$, for any $x \in C_3 \setminus \set[\big]{ i, i+1, \wbar{i+1}, \wbar{i} }$.

We compute $\qKoe_i$, $\qKof_i$, $\qKoec_i$, and $\qKofc_i$ on $w = 2 \wbar{1} 1 \wbar{2} 1 3$, for $i \in \set{1,2,3}$, using the signature rule for the quasi-tensor product $\dotimes$.
To keep track to which element originates each $-$ and $+$ we write a subscript with the position of the letter, this is just an auxiliary notation and the binary operation of $Z_0$ should be applied ignoring the subscripts.

For $i=1$, we have that
\[ \qtpsgn_1 (w) = \qtpsgn_1 \parens[\big]{ 2 \wbar{1} 1 \wbar{2} 1 3 } = {-_1} {-_2} {+_3} {+_4} {+_5}. \]
Thus, $\qKoe_1 (w) = 2 \wbar{2} 1 \wbar{2} 1 3$, $\qKof_1 (w) = 2 \wbar{1} 2 \wbar{2} 1 3$, $\qKoec_1 (w) = 2$, and $\qKofc_1 (w) = 3$.

For $i=2$, we have that
\[ \qtpsgn_2 (w) = \qtpsgn_2 \parens[\big]{ 2 \wbar{1} 1 \wbar{2} 1 3 } = {+_1} {-_4} {-_6} = 0. \]
Therefore, $\qKoec_2 (w) = \qKofc_2 (w) = +\infty$ which implies that $\qKoe_2$ and $\qKof_2$ are undefined on $w$.

For $i=3$, we have that
\[ \qtpsgn_3 (w) = \qtpsgn_3 \parens[\big]{ 2 \wbar{1} 1 \wbar{2} 1 3 } = {+_6}. \]
Hence, $\qKoec_3 (w) = 0$ which implies that $\qKoe_3$ is undefined on $w$, and $\qKof_3 (w) = 2 \wbar{1} 1 \wbar{2} 1 \wbar{3}$ and $\qKofc_3 (w) = 1$.
\end{exa}

\subsection{Hypoplactic monoids}
\label{subsec:hypom}

We now show how the hypoplactic monoid arises by identifying words in the same position of isomorphic connected components of a free $\dotimes$-quasi-crystal monoid.

\begin{dfn}
\label{dfn:hypoco}
Let $\qcrstQ$ be a seminormal quasi-crystal.
The \dtgterm{hypoplactic congruence} on $\fqtpqcm{\qcrstQ}$ is a relation $\hyco$ on $Q^*$ given as follows.
For $u, v \in Q^*$, $u \hyco v$ if and only if there exists a quasi-crystal isomorphism $\psi : \fqtpqcm{\qcrstQ} (u) \to \fqtpqcm{\qcrstQ} (v)$ such that $\psi (u) = v$.
\end{dfn}

Due to the following result, we can define the hypoplactic monoid over any seminormal quasi-crystal.

\begin{prop}[{\cite[Theorem~8.23]{Gui22}}]
\label{prop:hypocomoncong}
Let $\qcrstQ$ be a seminormal quasi-crystal.
Then, the hypoplactic congruence $\hyco$ on $\fqtpqcm{\qcrstQ}$ is a monoid congruence on the free monoid $Q^*$.
\end{prop}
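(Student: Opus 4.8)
The plan is to verify the two defining properties of a monoid congruence: that $\hyco$ is an equivalence relation, and that it is compatible with concatenation. Reflexivity, symmetry, and transitivity are immediate from \comboref{Definition}{dfn:hypoco}, using respectively the identity isomorphism on $\fqtpqcm{\qcrstQ}(u)$, the inverse of a witnessing isomorphism, and the composite of two witnessing isomorphisms. Given reflexivity, compatibility with the monoid operation reduces to showing that if $u \hyco v$ and $u' \hyco v'$ then $uu' \hyco vv'$, since from this one recovers $aub \hyco avb$ by taking $a \hyco a$ and $b \hyco b$.

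The central tool will be a \emph{concatenation lemma} describing the connected component of a product in terms of a quasi-tensor product. Since $\fqtpqcm{\qcrstQ}$ is a $\dotimes$-quasi-crystal monoid, the concatenation map $\mu : \fqtpqcm{\qcrstQ} \dotimes \fqtpqcm{\qcrstQ} \to \fqtpqcm{\qcrstQ}$, $w \dotimes w' \mapsto ww'$, is a quasi-crystal homomorphism by \itmcomboref{Definition}{dfn:qcmqtp}{dfn:qcmqtphom}. I would first observe that every element of the connected component of $u \dotimes u'$ in $\fqtpqcm{\qcrstQ} \dotimes \fqtpqcm{\qcrstQ}$ has the form $w \dotimes w'$ with $\abs{w} = \abs{u}$ and $\abs{w'} = \abs{u'}$, because the quasi-Kashiwara operators of the quasi-tensor product act by modifying a single letter of one of the two factors and hence preserve both lengths. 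Consequently $\mu$ is injective on this component, as a concatenation of words of fixed lengths splits back uniquely. I would then argue that $\mu$ restricts to a quasi-crystal isomorphism from the component of $u \dotimes u'$ onto $\fqtpqcm{\qcrstQ}(uu')$: preservation of $\qKoec_i$ and $\qKofc_i$, guaranteed by \itmcomboref{Definition}{dfn:qch}{dfn:qchwtc}, together with the seminormal conditions \itmcomboref{Definition}{dfn:qc}{dfn:qcpinfty} and \itmcomboref{Definition}{dfn:qc}{dfn:qcsn}, shows that $\qKoe_i$ (respectively $\qKof_i$) is defined on $w \dotimes w'$ exactly when it is defined on $ww'$; the homomorphism conditions \itmcomboref{Definition}{dfn:qch}{dfn:qchqKoe} and \itmcomboref{Definition}{dfn:qch}{dfn:qchqKof} then force the operators to be intertwined, and together with injectivity this matching of edges and loops yields the isomorphism, which sends $u \dotimes u'$ to $uu'$.

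Next I would establish functoriality of the quasi-tensor product for isomorphisms. If $\psi : \fqtpqcm{\qcrstQ}(u) \to \fqtpqcm{\qcrstQ}(v)$ and $\psi' : \fqtpqcm{\qcrstQ}(u') \to \fqtpqcm{\qcrstQ}(v')$ are the isomorphisms witnessing $u \hyco v$ and $u' \hyco v'$, then the map $w \dotimes w' \mapsto \psi(w) \dotimes \psi'(w')$ is a bijective quasi-crystal homomorphism, hence an isomorphism $\fqtpqcm{\qcrstQ}(u) \dotimes \fqtpqcm{\qcrstQ}(u') \to \fqtpqcm{\qcrstQ}(v) \dotimes \fqtpqcm{\qcrstQ}(v')$. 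This holds because the structure of the quasi-tensor product in \comboref{Definition}{dfn:qcqtp} is computed entirely from the weights and the values $\qKoec_i, \qKofc_i$ of the two factors together with their quasi-Kashiwara operators, all of which are preserved by $\psi$ and $\psi'$; in particular the case \itmcomboref{Definition}{dfn:qcqtp}{thm:qcqtpif} defining the loops is governed by the conditions $\qKofc_i(w) > 0$ and $\qKoec_i(w') > 0$, which are preserved. This isomorphism maps the connected component of $u \dotimes u'$ isomorphically onto the connected component of $\psi(u) \dotimes \psi'(u') = v \dotimes v'$.

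Finally, I would combine the pieces. Writing $\Theta_u$ and $\Theta_v$ for the concatenation isomorphisms from the components of $u \dotimes u'$ and $v \dotimes v'$ onto $\fqtpqcm{\qcrstQ}(uu')$ and $\fqtpqcm{\qcrstQ}(vv')$, the composite $\Theta_v \circ (\psi \dotimes \psi') \circ \Theta_u^{-1}$ is a quasi-crystal isomorphism $\fqtpqcm{\qcrstQ}(uu') \to \fqtpqcm{\qcrstQ}(vv')$ sending $uu'$ to $vv'$, which gives $uu' \hyco vv'$ and completes the proof. The hard part, deserving the most care, is the concatenation lemma of the second paragraph: one must check that $\mu$ genuinely reflects the quasi-crystal structure on the relevant component rather than merely preserving it in one direction, and this is exactly where injectivity (from the length argument) together with the seminormal characterization of when the operators are defined becomes indispensable. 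The analogous statement for the tensor product underlies \comboref{Proposition}{prop:plcomoncong}, but here one must additionally track the loops introduced by the quasi-tensor product.
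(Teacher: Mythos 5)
The paper offers no proof of this proposition: it is quoted from the thesis cited in its statement, and all proofs in \comboref{Section}{sec:hypomqc} are explicitly deferred to the literature, so there is no in-paper argument to compare against. Judged on its own, your proof is correct and is essentially the standard argument of those cited sources --- equivalence is immediate from \comboref{Definition}{dfn:hypoco}, and compatibility rests on exactly your two key steps, namely that concatenation restricts to an isomorphism from the connected component of $u \dotimes u'$ onto that of $uu'$ (with injectivity from length preservation and strict intertwining from $\qKoec_i$/$\qKofc_i$-preservation plus seminormality), and that isomorphisms of the factors induce an isomorphism of quasi-tensor products.
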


\begin{dfn}
\label{dfn:hypomon}
Let $\qcrstQ$ be a seminormal quasi-crystal, and let $\hyco$ be the hypoplactic congruence on $\fqtpqcm{\qcrstQ}$.
The quotient monoid $Q^* / {\hyco}$ is called the \dtgterm{hypoplactic monoid} associated to $\qcrstQ$ and is denoted by $\hypo (\qcrstQ)$.
\end{dfn}

As we pointed out for the plactic monoid, we have that the quasi-crystal structure of $\fqtpqcm{\qcrstQ}$ induces a quasi-crystal structure on $\hypo (\qcrstQ)$.
Although $\hypo(\qcrstQ)$ is a $\dotimes$-quasi-crystal monoid, we simply call it the plactic monoid, because we are mainly interested in studying it as a monoid.
However, we will be constantly considering its quasi-crystal structure, as it plays a fundamental role in the construction of $\hypo(\qcrstQ)$, and consequently, in its properties.

For type $\tAn$, we have that the hypoplactic monoid $\hypo (\crtAn)$ is anti-isomorphic to the classical hypoplactic monoid introduced by Krob and Thibon\avoidcitebreak \cite{KT97}.

\begin{exa}
\label{exa:hypoA3}
Consider the standard crystal $\crtA_3$ of type $\tA_3$.
The connected component $\Gamma_{\fqtpqcm{\crtA_3}} (2121)$
\[
\begin{tikzpicture}[widecrystal,baseline=(223.base)]
  %
  \node (2121) at (0, 0) {2121};
  \node (3121) at (1, 0) {3121};
  \node (3131) at (2, 0) {3131};
  \node (3231) at (3, 0) {3231};
  \node (3232) at (4, 0) {3232};
  \path (2121) edge node {2} (3121)
        (3121) edge node {2} (3131)
        (3131) edge node {1} (3231)
        (3231) edge node {1} (3232)
        (2121) edge [loop above] node {1} ()
        (3121) edge [loop above] node {1} ()
        (3231) edge [loop above] node {2} ()
        (3232) edge [loop above] node {2} ();
\end{tikzpicture}
\]
and the connected component $\Gamma_{\fqtpqcm{\crtA_3}} (122)$
\[
\begin{tikzpicture}[widecrystal,baseline=(223.base)]
  %
  \node (2121) at (0, 0) {1212};
  \node (3121) at (1, 0) {1312};
  \node (3131) at (2, 0) {1313};
  \node (3231) at (3, 0) {2313};
  \node (3232) at (4, 0) {2323};
  \path (2121) edge node {2} (3121)
        (3121) edge node {2} (3131)
        (3131) edge node {1} (3231)
        (3231) edge node {1} (3232)
        (2121) edge [loop above] node {1} ()
        (3121) edge [loop above] node {1} ()
        (3231) edge [loop above] node {2} ()
        (3232) edge [loop above] node {2} ();
\end{tikzpicture}
\]
are isomorphic.
We have that
$2121 \hyco 1212$,
$3121 \hyco 1312$,
$3131 \hyco 1313$,
$3231 \hyco 2313$,
and $3232 \hyco 2323$.
\end{exa}

\begin{exa}
\label{exa:hypoCn}
Consider the standard crystal $\crtCn$ of type $\tCn$.
The connected components 
$\Gamma_{\fqtpqcm{\crtCn}} (\ew)$,
$\Gamma_{\fqtpqcm{\crtCn}} \parens[\big]{ 1\wbar{1} }$,
$\Gamma_{\fqtpqcm{\crtCn}} \parens[\big]{ 1\wbar{1}1\wbar{1} }$, 
and $\Gamma_{\fqtpqcm{\crtCn}} \parens[\big]{ \wbar{1}11\wbar{1} }$
are respectively
\[
\begin{tikzpicture}[widecrystal,baseline=(b111b1.base)]
  %
  \node (ew) at (0, 0) {\ew};
  \node (1b1) at (1.5, 0) {1\wbar{1}};
  \node (1b11b1) at (3, 0) {1\wbar{1}1\wbar{1}};
  \node (b111b1) at (4.5, 0) {\wbar{1}11\wbar{1}};
  \path (1b1) edge [loop above] node {1} ()
        (1b11b1) edge [loop above] node {1} ()
        (b111b1) edge [loop above] node {1} ();
\end{tikzpicture}
,
\]
which correspond to isolated vertices in $\Gamma_{\fqtpqcm{\crtCn}}$.
We get that $1\wbar{1} \hyco 1\wbar{1}1\wbar{1} \hyco \wbar{1}11\wbar{1}$. 
Although $\wt \parens[\big]{ 1\wbar{1} } = 0 = \wt(\ew)$, 
we have that $1\wbar{1} \nhyco \ew$, because $\ew$ does not have a $1$-labelled loop, while $1\wbar{1}$ has such a loop, that is, 
$\qKoec_1 (\ew) = 0 \neq +\infty = \qKoec_1 \parens[\big]{ 1\wbar{1} }$.
\end{exa}

\section{\texorpdfstring%
  {From $\otimes$-quasi-crystal monoids to $\dotimes$-quasi-crystal monoids}%
  {From tensor product quasi-crystal monoids to quasi-tensor product quasi-crystal monoids}}
\label{sec:tpqcmqtpqcm}

In this section, we investigate wether a $\otimes$-quasi-crystal monoid gives rise to a $\dotimes$-quasi-crystal monoid.
For this purpose, we consider the following monoid property.

\begin{dfn}
\label{dfn:equidivisible}
A monoid $M$ is said to be \dtgterm{equidivisible} if for any elements $x_1, x_2, y_1, y_2 \in M$ satisfying $x_1 y_1 = x_2 y_2$, there exists $z \in M$ such that
$x_2 = x_1 z$ and $y_1 = z y_2$,
or such that $x_1 = x_2 z$ and $y_2 = z y_1$.
\end{dfn}

Equidivisible semigroups were introduced by Levi\avoidcitebreak \cite{Lev44}, and an in-depth study of these semigroups was done by McKnight and Storey in\avoidcitebreak \cite{MS69}.
As examples of equidivisible semigroups, we have free semigroups and completely simple semigroups.

\begin{lem}
\label{lem:tpqcmqtpqcmKasdecomp}
Let $\qcrstM$ be a $\otimes$-quasi-crystal monoid,
and let $x, y \in M$ and $i \in I$ be such that $\qKofc_i (x) > 0$ and $\qKoec_i (y) > 0$.
If $\qKoe_i$ is defined on $xy$, then $\qKoe_i (xy) = x'y'$ for some $x', y' \in M$ such that $\qKofc_i (x') > 0$ and $\qKoec_i (y') > 0$.
If $\qKof_i$ is defined on $xy$, then $\qKof_i (xy) = x'y'$ for some $x', y' \in M$ such that $\qKofc_i (x') > 0$ and $\qKoec_i (y') > 0$.
\end{lem}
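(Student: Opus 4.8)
The plan is to argue by a direct case analysis on the tensor-product rules for $\qKoe_i$ and $\qKof_i$ that hold in every $\otimes$-quasi-crystal monoid, combined with the elementary observation that applying a quasi-Kashiwara operator shifts the counts $\qKoec_i$ and $\qKofc_i$ by one. First I would reduce to the finite case: if $\qKoe_i$ or $\qKof_i$ is defined on $xy$, then by condition \itmref{dfn:qcpinfty} we must have $\qKoec_i(xy) \neq +\infty$, and since $\qKoec_i(xy) = \max \{ \qKoec_i(x), \qKoec_i(y) - \innerp{\wt(x)}{\alphav_i} \}$, it follows that $\qKoec_i(x)$ and $\qKoec_i(y)$, and hence also $\qKofc_i(x)$ and $\qKofc_i(y)$, are finite. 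So all counts appearing below are integers.

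Next I would record the two shift identities, which follow from seminormality \itmref{dfn:qcsn} together with \itmref{dfn:qciff}: whenever $\qKoe_i(z)$ is defined one has $\qKoec_i(\qKoe_i(z)) = \qKoec_i(z) - 1$ and $\qKofc_i(\qKoe_i(z)) = \qKofc_i(z) + 1$; symmetrically, whenever $\qKof_i(z)$ is defined one has $\qKoec_i(\qKof_i(z)) = \qKoec_i(z) + 1$ and $\qKofc_i(\qKof_i(z)) = \qKofc_i(z) - 1$.

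Now for the $\qKoe_i$ statement I would split on the branch condition of the tensor rule. If $\qKofc_i(x) \geq \qKoec_i(y)$ then $\qKoe_i(xy) = \qKoe_i(x) \cdot y$, and being defined forces $\qKoe_i(x)$ to be defined; I take $x' = \qKoe_i(x)$ and $y' = y$, so that $\qKofc_i(x') = \qKofc_i(x) + 1 > 0$ and $\qKoec_i(y') = \qKoec_i(y) > 0$. If instead $\qKofc_i(x) < \qKoec_i(y)$ then $\qKoe_i(xy) = x \cdot \qKoe_i(y)$; I take $x' = x$ and $y' = \qKoe_i(y)$, where $\qKofc_i(x') = \qKofc_i(x) > 0$ and $\qKoec_i(y') = \qKoec_i(y) - 1$, which is positive because $\qKoec_i(y) > \qKofc_i(x) \geq 1$. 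The $\qKof_i$ statement is entirely parallel: in the branch $\qKofc_i(x) > \qKoec_i(y)$ I set $x' = \qKof_i(x)$, $y' = y$ and use $\qKofc_i(x) > \qKoec_i(y) \geq 1$ to get $\qKofc_i(x') = \qKofc_i(x) - 1 \geq 1$; in the branch $\qKofc_i(x) \leq \qKoec_i(y)$ I set $x' = x$, $y' = \qKof_i(y)$, where $\qKoec_i(y') = \qKoec_i(y) + 1 > 0$ is immediate and $\qKofc_i(x') = \qKofc_i(x) > 0$ by hypothesis.

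The only delicate point, which is where I expect the real content to lie, is the bookkeeping of strict versus non-strict inequalities in the two branches that decrement a count: one must combine the branch condition with the standing hypotheses $\qKofc_i(x) > 0$ and $\qKoec_i(y) > 0$ to pass from a strict inequality to the bound needed after subtracting $1$. Once the shift identities are in hand, this is elementary.
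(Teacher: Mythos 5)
Your proof is correct and follows essentially the same route as the paper's: the same case split on the branch condition of the tensor-product rule, the same choices of $x'$ and $y'$ in each branch, and the same one-step shift identities for $\qKoec_i$ and $\qKofc_i$, with the strict branch inequality absorbing the decrement exactly as in the paper (which writes out only the $\qKof_i$ case and cites symmetry for $\qKoe_i$). Your preliminary finiteness reduction is a harmless addition that the paper leaves implicit, since definedness of the relevant operator already forces the counts in play to be integers.
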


\begin{proof}
Suppose that $\qKof_i$ is defined on $xy$.
If $\qKofc_i (x) > \qKoec_i (y)$, then $\qKof_i (xy) = \qKof_i (x) y$,
where $\qKofc_i \parens[\big]{ \qKof_i (x) } = \qKofc_i (x) - 1 \geq \qKoec_i (y) > 0$.
Otherwise, $\qKofc_i (x) \leq \qKoec_i (y)$ and $\qKof_i (xy) = x \qKof_i (y)$,
where $\qKoec_i \parens[\big]{ \qKof_i (y) } = \qKoec_i (y) + 1 > \qKoec_i (y) > 0$.

The result for $\qKoe_i$ follows analogously.
\end{proof}

\begin{lem}
\label{lem:tpqcmqtpqcminvdecomp}
Let $\qcrstM$ be a $\otimes$-quasi-crystal monoid which is equidivisible,
and let $x, y, x', y' \in M$ be such that $x y = x' y'$.
For $i \in I$, if $\qKofc_i (x) > 0$ and $\qKoec_i (y) > 0$, then one of the following conditions holds:
\begin{enumerate}
\item\label{lem:tpqcmqtpqcminvdecompf}
$x' = x'_1 x'_2$ for some $x'_1, x'_2 \in M$ such that $\qKofc_i (x'_1) > 0$ and $\qKoec_i (x'_2) > 0$;

\item\label{lem:tpqcmqtpqcminvdecomps}
$y' = y'_1 y'_2$ for some $y'_1, y'_2 \in M$ such that $\qKofc_i (y'_1) > 0$ and $\qKoec_i (y'_2) > 0$;

\item\label{lem:tpqcmqtpqcminvdecompb}
$\qKofc_i (x') > 0$ and $\qKoec_i (y') > 0$.
\end{enumerate}
\end{lem}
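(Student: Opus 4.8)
The plan is to exploit equidivisibility to factor the common product $xy = x'y'$ and then read off which of the three alternatives occurs directly from the product formulas for $\qKoec_i$ and $\qKofc_i$ in a $\otimes$-quasi-crystal monoid. Since $\qcrstM$ is equidivisible, \comboref{Definition}{dfn:equidivisible} gives some $z \in M$ with either $x' = xz$ and $y = zy'$, or $x = x'z$ and $y' = zy$. I would treat these two situations symmetrically, and in each split according to whether $z$ is, with respect to $i$, empty on the side facing the factorization point. Throughout I would use the identities $\qKoec_i(uv) = \max\set[\big]{\qKoec_i(u), \qKoec_i(v) - \innerp{\wt(u)}{\alphav_i}}$ and $\qKofc_i(uv) = \max\set[\big]{\qKofc_i(u) + \innerp{\wt(v)}{\alphav_i}, \qKofc_i(v)}$, together with $\qKofc_i(w) = \qKoec_i(w) + \innerp{\wt(w)}{\alphav_i}$ from \itmcomboref{Definition}{dfn:qc}{dfn:qcwt}, and the fact that $\qKoec_i$ and $\qKofc_i$ take values in $\Z_{\geq 0} \cup \set{+\infty}$.

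In the first case $x' = xz$, $y = zy'$, I would argue as follows. If $\qKoec_i(z) > 0$, then setting $x'_1 = x$ and $x'_2 = z$ immediately yields \itmref{lem:tpqcmqtpqcminvdecompf}, since $\qKofc_i(x'_1) = \qKofc_i(x) > 0$ by hypothesis. If instead $\qKoec_i(z) = 0$, then $\qKofc_i(z) = \innerp{\wt(z)}{\alphav_i} \geq 0$, and plugging this into the product formulas gives $\qKoec_i(y') > \innerp{\wt(z)}{\alphav_i} \geq 0$ from $\qKoec_i(y) > 0$ applied to $y = zy'$, while $x' = xz$ gives $\qKofc_i(x') \geq \qKofc_i(x) > 0$; this is exactly \itmref{lem:tpqcmqtpqcminvdecompb}.

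The second case $x = x'z$, $y' = zy$ is entirely dual, with the roles of $\qKoec_i$ and $\qKofc_i$ interchanged. If $\qKofc_i(z) > 0$, then $y'_1 = z$, $y'_2 = y$ gives \itmref{lem:tpqcmqtpqcminvdecomps}. If $\qKofc_i(z) = 0$, then $\qKoec_i(z) = -\innerp{\wt(z)}{\alphav_i} \geq 0$, and the same manipulations applied to $x = x'z$ and $y' = zy$ deliver $\qKofc_i(x') > 0$ and $\qKoec_i(y') > 0$, i.e.\ \itmref{lem:tpqcmqtpqcminvdecompb} again.

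I expect the only genuine point requiring care to be the bookkeeping around the value $+\infty$. Because the three alternatives are phrased purely as strict positivity conditions, every inequality used above remains valid when one of the quantities involved equals $+\infty$ (and the equivalence $\qKoec_i = +\infty \Leftrightarrow \qKofc_i = +\infty$ keeps the two functions consistent), so no separate infinite case analysis is needed. The remaining step that must be checked is that the split $\qKoec_i(z) > 0$ versus $\qKoec_i(z) = 0$ (respectively $\qKofc_i(z)$) is genuinely exhaustive and that the max-formulas are applied on the correct factorization in each branch; beyond that, the argument is a routine substitution into the product formulas.
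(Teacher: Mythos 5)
Your proof is correct and follows essentially the same route as the paper: equidivisibility produces the middle element $z$, each of the two resulting cases is split according to whether $\qKoec_i(z) > 0$ (resp.\ $\qKofc_i(z) > 0$) or $= 0$, and the zero subcase yields condition \itmref{lem:tpqcmqtpqcminvdecompb} via the product formulas. In fact your write-up is slightly more careful than the paper's, which glosses over the sign argument for $\innerp[\big]{\wt(z)}{\alphav_i}$ that you make explicit via $\qKofc_i(z) = \qKoec_i(z) + \innerp[\big]{\wt(z)}{\alphav_i}$.
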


\begin{proof}
Take $i \in I$ such that $\qKofc_i (x) > 0$ and $\qKoec_i (y) > 0$.
As $M$ is equidivisible, there exists $z \in M$ such that one of the following cases hold.
\begin{itemize}
\item Case 1: $x' = x z$ and $y = z y'$.
If $\qKoec_i (z) > 0$, then condition\avoidrefbreak \itmref{lem:tpqcmqtpqcminvdecompf} is verified.
Thus, assume $\qKoec_i (z) = 0$.
This implies that $\qKofc_i (x') \geq \qKofc_i (x) > 0$.
Also, $\qKoec_i (y') = \qKoec_i (z) + \qKoec_i (y') \geq \qKoec_i (y) > 0$.
Hence, condition\avoidrefbreak \itmref{lem:tpqcmqtpqcminvdecompb} is satisfied.

\item Case 2: $x = x' z$ and $y' = z y$.
If $\qKofc_i (z) > 0$, then condition\avoidrefbreak \itmref{lem:tpqcmqtpqcminvdecomps} is verified.
Thus, assume $\qKofc_i (z) = 0$.
This implies that $\qKoec_i (y') \geq \qKoec_i (y) > 0$.
Also, $\qKofc_i (x') = \qKofc_i (x') + \qKofc_i (z) \geq \qKofc_i (x) > 0$.
Hence, condition\avoidrefbreak \itmref{lem:tpqcmqtpqcminvdecompb} is satisfied.
\end{itemize}
Therefore, one condition holds as required.
\end{proof}

\begin{prop}
\label{prop:tpqcmqtpqcm}
Let $\qcrstM_{\otimes} = \parens[\big]{ M; {\cdot}, {\wt}, (\tpqKoe_i)_{i \in I}, (\tpqKof_i)_{i \in I}, (\tpqKoec_i)_{i \in I}, (\tpqKofc_i)_{i \in I} }$ be a $\otimes$-quasi-crystal monoid which is equidivisible.
For each $i \in I$, let $\qtpqKoe_i, \qtpqKof_i : M \to M \sqcup \set{\undf}$ and $\qtpqKoec_i, \qtpqKofc_i : M \to \Z \cup \set{ +\infty }$ be defined for each $x \in M$ by
\begin{enumerate}
\item\label{prop:tpqcmqtpqcmif}
if $x = x_1 x_2$, for some $x_1, x_2 \in M$ satisfying $\tpqKofc_i (x_1) > 0$ and $\tpqKoec_i (x_2) > 0$, set
$\qtpqKoe_i (x) = \qtpqKof_i (x) = \undf$
and
$\qtpqKoec_i (x) = \qtpqKofc_i (x) = +\infty$;

\item\label{prop:tpqcmqtpqcmot}
otherwise, set
$\qtpqKoe_i (x) = \tpqKoe_i (x)$,
$\qtpqKof_i (x) = \tpqKof_i (x)$,
$\qtpqKoec_i (x) = \tpqKoec_i (x)$,
and
$\qtpqKofc_i (x) = \tpqKofc_i (x)$.
\end{enumerate}
Then, $\qcrstM_{\dotimes} = \parens[\big]{ M; {\cdot}, {\wt}, (\qtpqKoe_i)_{i \in I}, (\qtpqKof_i)_{i \in I}, (\qtpqKoec_i)_{i \in I}, (\qtpqKofc_i)_{i \in I} }$ is a $\dotimes$-quasi-\linebreak{}crystal monoid.
\end{prop}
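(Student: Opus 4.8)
The plan is to verify the three defining conditions of a $\dotimes$-quasi-crystal monoid (\comboref{Definition}{dfn:qcmqtp}). Condition \itmref{dfn:qcmqtpmon} is immediate, since the monoid $(M,{\cdot})$ is left untouched. It remains to check that the new structure is a seminormal quasi-crystal (condition \itmref{dfn:qcmqtpqc}) and that multiplication induces a quasi-crystal homomorphism $\qcrstM_{\dotimes} \dotimes \qcrstM_{\dotimes} \to \qcrstM_{\dotimes}$ (condition \itmref{dfn:qcmqtphom}). Throughout I call $x \in M$ \emph{$i$-decomposable} when $x = x_1 x_2$ for some $x_1, x_2 \in M$ with $\tpqKofc_i(x_1) > 0$ and $\tpqKoec_i(x_2) > 0$, i.e. when $x$ falls under case \itmref{prop:tpqcmqtpqcmif}. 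Thus the new operators coincide with the old ones on elements that are not $i$-decomposable, and return $\undf$ or $+\infty$ otherwise.

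For the quasi-crystal axioms (\comboref{Definition}{dfn:qc}), conditions \itmref{dfn:qcwt}--\itmref{dfn:qcqKof} and \itmref{dfn:qcpinfty} follow by a direct case split on $i$-decomposability, using that the weight map is unchanged and that the $\otimes$-structure already satisfies them. The substantive points are \itmref{dfn:qciff} and \itmref{dfn:qcsn}, and both rest on the observation, supplied by \comboref{Lemma}{lem:tpqcmqtpqcmKasdecomp}, that $i$-decomposability is preserved by the old operators $\tpqKoe_i$ and $\tpqKof_i$. Contrapositively, if $x$ is not $i$-decomposable and $\tpqKoe_i(x) = y \in M$, then $y$ is not $i$-decomposable, so $\qtpqKoe_i(x) = \tpqKoe_i(x) = y$ and $\qtpqKof_i(y) = \tpqKof_i(y) = x$; this yields \itmref{dfn:qciff}. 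The same observation shows that the orbit of a non-$i$-decomposable element under $\qtpqKoe_i$ (resp.\ $\qtpqKof_i$) coincides with its orbit under $\tpqKoe_i$ (resp.\ $\tpqKof_i$), whence the maximality condition \itmref{dfn:qcsn} for the new structure reduces to the one already known for the $\otimes$-structure.

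The heart of the argument is condition \itmref{dfn:qcmqtphom}. The key technical step is the characterization
\[
\text{$xy$ is $i$-decomposable}
\quad\Longleftrightarrow\quad
\text{$x$ or $y$ is $i$-decomposable, or } \tpqKofc_i(x) > 0 \text{ and } \tpqKoec_i(y) > 0 .
\]
The forward implication is \comboref{Lemma}{lem:tpqcmqtpqcminvdecomp} applied to the factorization witnessing $i$-decomposability of $xy$; the backward implication is a direct construction, exhibiting an explicit factorization of $xy$ from one of $x$, $y$ by absorbing the remaining factor and using the tensor formulas for $\tpqKoec_i$, $\tpqKofc_i$ to keep the relevant count positive. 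Granting this, I would split into two cases. If $xy$ is not $i$-decomposable, then neither is $x$ nor $y$, and the condition ``$\tpqKofc_i(x) > 0$ and $\tpqKoec_i(y) > 0$'' fails; hence the new operators agree with the old ones on $x$, $y$ and $xy$, the pair $x \dotimes y$ lies in branch \itmref{thm:qcqtpot} of the quasi-tensor product, and all required identities collapse to the fact that multiplication is a $\otimes$-homomorphism, together with \comboref{Remark}{rmk:qctpqtp}. If $xy$ is $i$-decomposable, then $\qtpqKoec_i(xy) = \qtpqKofc_i(xy) = +\infty$ and $\qtpqKoe_i(xy) = \qtpqKof_i(xy) = \undf$, and it remains to check that the quasi-tensor product produces the same values on $x \dotimes y$.

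The main obstacle is precisely this last check, because $i$-decomposability of $xy$ does \emph{not} correspond exactly to $x \dotimes y$ falling into branch \itmref{thm:qcqtpif}: a single $i$-decomposable factor forces $xy$ to be $i$-decomposable while $x \dotimes y$ may still be computed through branch \itmref{thm:qcqtpot} (this happens exactly when one factor is $i$-decomposable and the relevant count of the other vanishes). The resolution is that an $i$-decomposable factor has $\qtpqKoec_i = \qtpqKofc_i = +\infty$, so the maxima in the branch-\itmref{thm:qcqtpot} formulas for $\qtpqKoec_i(x \dotimes y)$ and $\qtpqKofc_i(x \dotimes y)$ are again $+\infty$, while the defining comparison between $\qtpqKofc_i(x)$ and $\qtpqKoec_i(y)$ routes the quasi-Kashiwara operators onto the factor carrying $\undf$. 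Thus in every configuration the quasi-tensor-product values on $x \dotimes y$ match the new values on $xy$, which completes the verification of the homomorphism and hence of the proposition.
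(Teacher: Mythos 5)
Your proposal is correct and follows essentially the same route as the paper's proof: the quasi-crystal axioms are verified via \comboref{Lemma}{lem:tpqcmqtpqcmKasdecomp} exactly as in the paper, and the homomorphism condition rests on the same case analysis, with \comboref{Lemma}{lem:tpqcmqtpqcminvdecomp} (where equidivisibility enters) handling the case where the product is $i$-decomposable and the absorption argument ($xy = x_1(x_2y)$ or $(xy_1)y_2$) handling a decomposable factor. Your only departure is organizational: you package the case split as an explicit ``iff'' characterization of $i$-decomposability of $xy$ and spell out the subtlety that a decomposable factor may still route $x \dotimes y$ through branch \itmref{thm:qcqtpot} of the quasi-tensor product, a point the paper compresses into ``which implies that $\qtpqKoec_i (x \dotimes y) = \qtpqKofc_i (x \dotimes y) = +\infty$''.
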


\begin{proof}
Let $x \in M$ and $i \in I$.
If $x = x_1 x_2$, for some $x_1, x_2 \in M$ such that $\tpqKofc_i (x_1) > 0$ and $\tpqKoec_i (x_2) > 0$,
then all axioms of \comboref{Definition}{dfn:qc} are satisfied.
Otherwise, the values of $\qtpqKoe_i$, $\qtpqKof_i$, $\qtpqKoec_i$, and $\qtpqKofc_i$ on $x$ coincide with the values of $\tpqKoe_i$, $\tpqKof_i$, $\tpqKoec_i$, and $\tpqKofc_i$ on $x$, respectively.
Moreover, if $\qtpqKoe_i$ is defined on $x$, then $\qtpqKoe_i (x)$ does not admit a decomposition of the form $x_1 x_2$ with $x_1, x_2 \in M$ such that $\tpqKofc_i (x_1) > 0$ and $\tpqKoec_i (x_2) > 0$,
because $x = \tpqKof_i \parens[\big]{ \tpqKoe_i (x) }$ and, by \comboref{Lemma}{lem:tpqcmqtpqcmKasdecomp}, $x$ would also have such a decomposition.
Analogously, if $\qtpqKof_i$ is defined on $x$, then $\qtpqKof_i (x)$ does not admit a decomposition of the form $x_1 x_2$ with $x_1, x_2 \in M$ such that $\tpqKofc_i (x_1) > 0$ and $\tpqKoec_i (x_2) > 0$.
Then, by iterating this process, for any $k \geq 0$, we have that $(\qtpqKoe_i)^k$ or $(\qtpqKof_i)^k$ is defined on $x$ if and only if $(\tpqKoe_i)^k$ or $(\tpqKof_i)^k$ is defined on $x$, respectively.
As 
$\qcrstM_{\otimes}$ 
is a seminormal quasi-crystal,
it is now immediate that
$\qcrstM_{\dotimes}$ 
is also a seminormal quasi-crystal.

It remains to show that the map $\psi : M \dotimes M \to M$, given by $\psi (x \dotimes y) = xy$ for each $x, y \in M$, is a quasi-crystal homomorphism from $\qcrstM_{\dotimes} \dotimes \qcrstM_{\dotimes}$ to $\qcrstM_{\dotimes}$.

Let $x, y \in M$.
We have that
\[ \wt (x \dotimes y) = \wt(x) + \wt(y) = \wt(xy) = \wt \parens[\big]{ \psi (x \dotimes y) }. \]
Let $i \in I$.
If $x$, $y$, and $xy$ do not admit decompositions of the form $z_1 z_2$ with $z_1, z_2 \in M$ such that $\tpqKofc_i (z_1) > 0$ and $\tpqKoec_i (z_2) > 0$ (in particular, $\tpqKofc_i (x) = 0$ or $\tpqKoec_i (y) = 0$), 
then the values of $\qtpqKoe_i$, $\qtpqKof_i$, $\qtpqKoec_i$, and $\qtpqKofc_i$ on $x$, $y$, and $xy$ coincide with the values of $\tpqKoe_i$, $\tpqKof_i$, $\tpqKoec_i$, and $\tpqKofc_i$ on $x$, $y$, and $xy$, respectively; 
and since $\qcrstM_{\otimes}$ is a $\otimes$-quasi-crystal monoid, we obtain from \comboref{Remark}{rmk:qctpqtp} that $\psi$ satisfies the axioms of \comboref{Definition}{dfn:qch}.
Otherwise, we get one of the following cases.
\begin{itemize}
\item Case 1: $x = x_1 x_2$ for some $x_1, x_2 \in M$ such that $\tpqKofc_i (x_1) > 0$ and $\tpqKoec_1 (x_2) > 0$.
Then, $\qtpqKoec_i (x) = \qtpqKofc_i (x) = +\infty$, which implies that $\qtpqKoec_i (x \dotimes y) = \qtpqKofc_i (x \dotimes y) = +\infty$.
Since $\tpqKoec_i (x_2 y) \geq \tpqKoec_i (x_2) > 0$, we also have that $\qtpqKoec_i (xy) = \qtpqKofc_i (xy) = +\infty$.

\item Case 2: $y = y_1 y_2$ for some $y_1, y_2 \in M$ such that $\tpqKofc_i (y_1) > 0$ and $\tpqKoec_i (y_2) > 0$.
Then, $\qtpqKoec_i (y) = \qtpqKofc_i (y) = +\infty$, which implies that $\qtpqKoec_i (x \dotimes y) = \qtpqKofc_i (x \dotimes y) = +\infty$.
Since $\tpqKofc_i (x y_1) \geq \tpqKofc_i (y_1) > 0$, we also have that $\qtpqKoec_i (xy) = \qtpqKofc_i (xy) = +\infty$.

\item Case 3: $xy = z_1 z_2$ for some $z_1, z_2 \in M$ such that $\tpqKofc_i (z_1) > 0$ and $\tpqKoec_i (z_2) > 0$.
By \comboref{Lemma}{lem:tpqcmqtpqcminvdecomp}, we have case 1, case 2, or $\tpqKofc_i (x) > 0$ and $\tpqKoec_i (y) > 0$.
Then, $\qtpqKoec_i (x \dotimes y) = \qtpqKofc_i (x \dotimes y) = \qtpqKoec_i (xy) = \qtpqKofc_i (xy) = +\infty$.
\end{itemize}
Therefore, $\qcrstM_{\dotimes}$ is a $\dotimes$-quasi-crystal monoid.
\end{proof}

Let $\qcrstQ$ be a seminormal quasi-crystal.
From the previous result, we can easily obtain the quasi-crystal graph $\Gamma_{\fqtpqcm{\qcrstQ}}$ of the free $\dotimes$-quasi-crystal monoid $\fqtpqcm{\qcrstQ}$ from the quasi-crystal graph $\Gamma_{\ftpqcm{\qcrstQ}}$ of the free $\otimes$-quasi-crystal monoid $\ftpqcm{\qcrstQ}$.
Start with $\Gamma_{\ftpqcm{\qcrstQ}}$; 
then, for each word $w \in Q^*$ and each $i \in I$ such that $w = u v$, for some $u, v \in Q^*$ where $u$ is the start of an $i$-labelled edge and $v$ is the end of an $i$-labelled edge, remove any $i$-labelled edge starting or ending on $w$, and then add an $i$-labelled loop on $w$; 
thus, the resulting graph is $\Gamma_{\fqtpqcm{\qcrstQ}}$.

From the following observation, we have an alternative description of the decomposition of an element $x$ as $x_1 x_2$ with $\tpqKofc_i (x_1) > 0$ and $\tpqKoec_i (x_2) > 0$.

\begin{rmk}
Let $\qcrstM$ be a $\otimes$-quasi-crystal monoid, and let $x \in M$.
If $x = x_1 x_2 x_3 x_4 x_5$ with $x_1, x_2, x_3, x_4, x_5 \in M$ such that $\qKofc_i (x_2) > 0$ and $\qKoec_i (x_4) > 0$.
If $\qKofc_i (x_3) > 0$, take $y = x_1 x_2 x_3$ and $z = x_4 x_5$; otherwise, take $y = x_1 x_2$ and $z = x_3 x_4 x_5$.
Then, $x = yz$ where $\qKofc_i (y) > 0$ and $\qKoec_i (z) > 0$.
\end{rmk}

Let $\qcrstQ$ be a seminormal quasi-crystal.
From the previous remark, we have that the quasi-crystal graph $\Gamma_{\fqtpqcm{\qcrstQ}}$ of the free $\dotimes$-quasi-crystal monoid $\fqtpqcm{\qcrstQ}$ can be constructed from the quasi-crystal graph $\Gamma_{\ftpqcm{\qcrstQ}}$ of the free $\otimes$-quasi-crystal monoid $\ftpqcm{\qcrstQ}$ as follows.
Start with $\Gamma_{\ftpqcm{\qcrstQ}}$; 
then, for each word $w \in Q^*$ and each $i \in I$ such that $w = u_1 x u_2 y u_3$, for some $u_1, u_2, u_3 \in Q^*$ and $x, y \in Q$ where $x$ is the start of an $i$-labelled edge and $y$ is the end of an $i$-labelled edge, remove any $i$-labelled edge starting or ending on $w$, and then add an $i$-labelled loop on $w$; 
thus, the resulting graph is $\Gamma_{\fqtpqcm{\qcrstQ}}$.

When $\qcrstQ$ is the standard crystal $\crtAn$ of type $\tAn$,
this construction of $\Gamma_{\fqtpqcm{\crtAn}}$ from $\Gamma_{\ftpqcm{\crtAn}}$ compares with the one given by Cain and Malheiro in\avoidcitebreak \cite{CM17crysthypo}.

\begin{exa}
\label{exa:ftpqcmA3fqtpqcmA3}
Consider the standard crystal $\crtA_3$ of type $\tA_3$.
From the connected component $\Gamma_{\ftpqcm{\crtA_3}} (112)$, described in \comboref{Example}{exa:plA3}, 
we remove the $1$-labelled edge $112 \lbedge{1} 212$; 
add $1$-labelled loops on $112$, $212$, and $312$; 
remove the $2$-labelled edge $223 \lbedge{2} 323$; 
and adde $2$-labelled loops on $213$, $223$, and $323$.
The resulting quasi-crystal graph is
\[
\begin{tikzpicture}[widecrystal,baseline=(223.base)]
  %
  \node (112) at (1, 1.5) {112};
  \node (212) at (2, 2) {212};
  \node (312) at (3, 2) {312};
  \node (313) at (4, 2) {313};
  \node (113) at (2, 1) {113};
  \node (213) at (3, 1) {213};
  \node (223) at (4, 1) {223};
  \node (323) at (5, 1.5) {323};
  \path (212) edge node {2} (312)
        (312) edge node {2} (313)
        (313) edge node {1} (323)
        (112) edge[swap] node {2} (113)
        (113) edge[swap] node {1} (213)
        (213) edge[swap] node {1} (223)
        (112) edge [loop above] node {1} ()
        (212) edge [loop above] node {1} ()
        (312) edge [loop above] node {1} ()
        (213) edge [loop below] node {2} ()
        (223) edge [loop below] node {2} ()
        (323) edge [loop below] node {2} ();
\end{tikzpicture}
\]
which cooresponds to the connected components $\Gamma_{\fqtpqcm{\crtA_3}} (112)$ and $\Gamma_{\fqtpqcm{\crtA_3}} (212)$.
\end{exa}

\begin{exa}
\label{exa:ftpqcmC3fqtpqcmC3}
Consider the standard crystal $\crtC_3$ of type $\tC_3$.
From the connected component $\Gamma_{\ftpqcm{\crtC_3}} \parens[\big]{ 12\wbar{2} }$, described in \comboref{Example}{exa:plC3}, 
we remove the $1$-labelled edges $12\wbar{2} \lbedge{1} 12\wbar{1}$ and $1\wbar{2}\,\wbar{1} \lbedge{1} 2\wbar{2}\,\wbar{1}$; 
add $1$-labelled loops on each vertex; 
and adde $2$-labelled loops on $12\wbar{2}$ and $2\wbar{2}\,\wbar{1}$.
The resulting quasi-crystal graph is
\[
\begin{tikzpicture}[widecrystal,baseline=(223.base)]
  %
  \node (12b2) at (0, 0) {12\wbar{2}};
  \node (12b1) at (1, 0) {12\wbar{1}};
  \node (13b1) at (2, 0) {13\wbar{1}};
  \node (1b3b1) at (3, 0) {1\wbar{3}\,\wbar{1}};
  \node (1b2b1) at (4, 0) {1\wbar{2}\,\wbar{1}};
  \node (2b2b1) at (5, 0) {2\wbar{2}\,\wbar{1}};
  \path (12b1) edge node {2} (13b1)
        (13b1) edge node {3} (1b3b1)
        (1b3b1) edge node {2} (1b2b1)
        (12b2) edge [loop above] node {1} ()
        (12b1) edge [loop above] node {1} ()
        (13b1) edge [loop above] node {1} ()
        (1b3b1) edge [loop above] node {1} ()
        (1b2b1) edge [loop above] node {1} ()
        (2b2b1) edge [loop above] node {1} ()
        (12b2) edge [loop left] node {2} ()
        (2b2b1) edge [loop right] node {2} ();
\end{tikzpicture}
\]
which cooresponds to the connected components 
$\Gamma_{\fqtpqcm{\crtC_3}} \parens[\big]{ 12\wbar{2} }$,
$\Gamma_{\fqtpqcm{\crtC_3}} \parens[\big]{ 12\wbar{1} }$,
and $\Gamma_{\fqtpqcm{\crtC_3}} \parens[\big]{ 2\wbar{2}\,\wbar{1} }$.
\end{exa}

\section{Hypoplactic monoids as quotients of plactic monoids}
\label{sec:hypomplacm}

From the presentation for the classical hypoplactic monoid given by Krob and Thibon\avoidcitebreak \cite{KT97}, which contains the Knuth relations\avoidcitebreak \cite{Knu70}, 
it is immediate that the hypoplactic monoid $\hypo(\crtAn)$ of type $\tAn$ is a quotient of the plactic monoid $\plac (\crtAn)$ of type $\tAn$.
On the other hand, as shown in\avoidcitebreak \cite{CGM23quasi-crystals}, the hypoplactic monoid $\hypo(\crtCn)$ of type $\tCn$ contains submonoids isomorphic to $\hypo(\crtA_{n-1})$ and $\hypo(\crtC_{n-1})$, 
but it is not a quotient of the plactic monoid $\plac(\crtCn)$ of type $\tCn$.
Understanding the properties that originate these results is an active research problem.

From the following results, we get a sufficient condition for a hypoplactic monoid to be a quotient of the plactic monoid associated to the same seminormal quasi-crystal.

\begin{lem}
\label{lem:plcoliinv}
Consider a seminormal quasi-crystal $\qcrstQ$. 
Let $x_1, \ldots, x_m \in Q$, and let $\sigma$ be a permutation of $\set{1,\ldots,m}$ such that $x_1 \ldots x_m \plco x_{\sigma(1)} \ldots x_{\sigma(m)}$. 
Then, for each $i \in I$, there exist $k, l \in \set{1,\ldots,m}$ such that $k < l$, $\qKofc_i (x_k) > 0$ and $\qKoec_i (x_l) > 0$
if and only if
there exist $k', l' \in \set{1,\ldots,m}$ such that $\sigma(k') < \sigma(l')$, $\qKofc_i (x_{\sigma(k')}) > 0$, and $\qKoec_i (x_{\sigma(l')}) > 0$.
\end{lem}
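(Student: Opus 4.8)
The plan is to read the right-hand condition as asserting the same sign pattern for the rearranged word $w' = x_{\sigma(1)} \cdots x_{\sigma(m)}$, namely a letter with $\qKofc_i > 0$ occurring in $w'$ strictly before a letter with $\qKoec_i > 0$; the two sides then assert this pattern for $w = x_1 \cdots x_m$ and for $w'$, respectively. Two invariants drive the argument. First, $w'$ is a rearrangement of $w$, so the multiset of letters is the same and $\sum_{j=1}^m \qKoec_i(x_j) = \sum_{j=1}^m \qKoec_i(x_{\sigma(j)})$, with the analogous equality for $\qKofc_i$; write $S_i$ and $T_i$ for these common sums. Second, since $w \plco w'$, the quasi-crystal isomorphism realizing the plactic relation preserves $\qKoec_i$ and $\qKofc_i$ (as noted right after \comboref{Definition}{dfn:placmon}), so $\qKoec_i(w) = \qKoec_i(w')$ and $\qKofc_i(w) = \qKofc_i(w')$ in $\ftpqcm{\qcrstQ}$. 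I would then reduce the pattern in either word to these $\plco$-invariant quantities.

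Suppose first that no letter satisfies $\qKoec_i(x_j) = +\infty$. The $i$-signature map $\tpsgn_i$ of \comboref{Section}{subsec:qcmtp} is a homomorphism, so $\tpsgn_i(w) = \tpsgn_i(x_1) \cdots \tpsgn_i(x_m)$ is a product of blocks ${-^{\qKoec_i(x_j)}} {+^{\qKofc_i(x_j)}}$ that reduces in $B_0$ to ${-^{\qKoec_i(w)}} {+^{\qKofc_i(w)}}$. Each use of the defining relation $({+}{-}, \ew)$ cancels a $+$ against a strictly later $-$, so the number of cancellations equals $S_i - \qKoec_i(w) = T_i - \qKofc_i(w)$, and this number is positive exactly when some $x_k$ with $\qKofc_i(x_k) > 0$ precedes some $x_l$ with $\qKoec_i(x_l) > 0$, i.e.\ exactly when the pattern holds in $w$. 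Thus the pattern holds in $w$ if and only if $\qKoec_i(w) < S_i$; since $\qKoec_i(w) = \qKoec_i(w')$ and the sum $S_i$ is shared, the same inequality characterizes the pattern in $w'$, and the equivalence follows.

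The main obstacle is the case where some letter is a loop letter for $i$, meaning $\qKoec_i(x_j) = +\infty$: then $S_i = T_i = +\infty = \qKoec_i(w) = \qKoec_i(w')$, and the cancellation count no longer detects the pattern, so one must argue directly. Because the multiset of letters is shared, $w$ and $w'$ contain equally many loop letters. If there are at least two, the first is a letter with $\qKofc_i > 0$ preceding a second with $\qKoec_i > 0$, so the pattern holds in both words at once. The remaining sub-case, exactly one loop letter $e$, is the delicate one: here I would use the reformulation that the pattern holds in a word $z$ if and only if $z = z_1 z_2$ with $\qKofc_i(z_1) > 0$ and $\qKoec_i(z_2) > 0$ --- proved by choosing a witnessing pair of minimal index gap, between which every letter is $i$-inert, so that splitting just after the left letter works, as in the Remark in \comboref{Section}{sec:tpqcmqtpqcm} --- and then transfer this factorization from $w$ to $w'$ using equidivisibility of $Q^*$ together with the decomposition analysis of \comboref{Lemma}{lem:tpqcmqtpqcminvdecomp} to track the position of $e$. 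I expect this transfer to be the crux; note that for the standard crystals $\crtAn$ and $\crtCn$ there are no loop letters, so this sub-case is vacuous in the cases of primary interest.
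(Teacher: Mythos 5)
Your main-case argument is the paper's own proof. The paper also works with the $i$-signature homomorphism $\tpsgn_i$ and the two invariants you isolate: assuming the pattern (a letter with $\qKofc_i > 0$ strictly before one with $\qKoec_i > 0$) holds in $x_1 \ldots x_m$ but fails in $x_{\sigma(1)} \ldots x_{\sigma(m)}$, cancellation occurs in the first signature but not in the second, so
\[
\qKoec_i (x_1 \ldots x_m) < \qKoec_i (x_1) + \cdots + \qKoec_i (x_m) = \qKoec_i (x_{\sigma(1)} \ldots x_{\sigma(m)}),
\]
contradicting the $\plco$-invariance of $\qKoec_i$. Your ``the pattern holds in $w$ iff $\qKoec_i (w) < S_i$'' is the same cancellation count, stated positively instead of by contradiction. (You also silently repaired the statement: as printed, the right-hand condition with ``$\sigma(k') < \sigma(l')$'' reduces to the left-hand one under the substitution $k = \sigma(k')$, $l = \sigma(l')$, so the literal statement is trivially true; like the paper's proof and its later application, you read it as ``$k' < l'$'', i.e.\ as the pattern in the permuted word.)

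The divergence is the loop-letter case, and there the news is worse than you suspect. The paper does not treat this case at all: if some $\qKoec_i (x_j) = +\infty$, then $\tpsgn_i (x_j) = 0$, every term in the display above becomes $+\infty$, and no contradiction is reached; so the published argument, like your main one, is valid only when all $\qKoec_i (x_j)$ are finite. Your two-or-more-loop-letters observation is correct, but the sub-case you leave open --- exactly one loop letter --- cannot be closed by the proposed equidivisibility transfer, or by anything else, because the intended statement is false there. Take type $\tA_2$ and $Q = \set{a, b, e}$, where $a \lbedge{1} b$ is a copy of $\crtA_2$ (so $\wt(a) = \vc{e_1}$, $\wt(b) = \vc{e_2}$, $\qKofc_1 (a) = \qKoec_1 (b) = 1$, $\qKoec_1 (a) = \qKofc_1 (b) = 0$) and $e$ is a loop letter: $\wt(e) = 0$, $\qKoe_1 (e) = \qKof_1 (e) = \undf$, $\qKoec_1 (e) = \qKofc_1 (e) = +\infty$. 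This is a seminormal quasi-crystal. In $\ftpqcm{\qcrstQ}$, both $u = eb$ and $v = be$ have $\qKoec_1 = \qKofc_1 = +\infty$ and all quasi-Kashiwara operators undefined, so each connected component is a single vertex carrying a $1$-labelled loop, of the same weight $\vc{e_2}$; hence $u \plco v$. Yet the pattern holds in $u$ (take $k = 1$, $l = 2$: $\qKofc_1 (e) = +\infty > 0$ and $\qKoec_1 (b) = 1 > 0$) and fails in $v$, whose only letter with $\qKofc_1 > 0$ is its last one. So the lemma needs the additional hypothesis that no $x_j$ has $\qKoec_i (x_j) = +\infty$ --- automatic when $\qcrstQ$ is a seminormal crystal, in particular for $\crtAn$ and $\crtCn$ --- and under that hypothesis your first two paragraphs already constitute a complete proof, your third being vacuous. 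The restriction costs nothing in the paper's application: there the two words are known to have the same letters (that is where linear independence of the weights is used), so a loop letter occurs in one word iff it occurs in the other, in which case $\qtpsgn_i$ sends both words to the zero of $Z_0$ and $\qtpqKoec_i = +\infty$ holds on both sides without appeal to the lemma.
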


\begin{proof}
Set $u = x_1 \ldots x_m$ and $v = x_{\sigma(1)} \ldots x_{\sigma(m)}$.
Let $i \in I$.
By way of contradiction, suppose that there exist $k < l$ such that $\qKofc_i (x_k) > 0$ and $\qKoec_i (x_l) > 0$, and that $\qKofc_i (x_{\sigma(k')}) > 0$ and $\qKoec_i (x_{\sigma(l')}) > 0$ implies $\sigma(k') \geq \sigma(l')$.
Then, in the expression $\tpsgn_i (x_1) \cdots \tpsgn_i (x_m)$, we have at least a ${+}$ to the left of a ${-}$, 
while in the expression $\tpsgn_i (x_{\sigma(1)}) \cdots \tpsgn_i (x_{\sigma(m)})$, every ${-}$ is to the left of the first ${+}$.
Hence, in $\ftpqcm{\qcrstQ}$, we have that
\[
  \qKoec_i (u) < \qKoec_i (x_1) + \cdots + \qKoec_i (x_m) = \qKoec_i (x_{\sigma(1)}) + \cdots + \qKoec_i (x_{\sigma(m)}) = \qKoec_i (v),
\]
which is a contradiction, because $u \plco v$.

The converse implication follows analogously.
\end{proof}

\begin{prop}
Let $\qcrstQ$ be a seminormal quasi-crystal such that
$\set[\big]{ \wt(x) \given x \in Q }$
is a linearly independent set.
Then, ${\plco} \subseteq {\hyco}$.
Therefore, $\hypo(\qcrstQ)$ is a monoid quotient of $\plac(\qcrstQ)$.
\end{prop}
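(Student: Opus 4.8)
The plan is to prove the inclusion ${\plco} \subseteq {\hyco}$ directly at the level of the defining isomorphisms and then read off the quotient statement. So I would start from $u \plco v$ and fix, as in \comboref{Definition}{dfn:plco}, a quasi-crystal isomorphism $\psi : \ftpqcm{\qcrstQ}(u) \to \ftpqcm{\qcrstQ}(v)$ of $\otimes$-quasi-crystals with $\psi(u) = v$. The first and decisive use of the hypothesis is this: since $\psi$ preserves weights, every word $w$ in $\ftpqcm{\qcrstQ}(u)$ satisfies $\wt(w) = \wt\parens[\big]{\psi(w)}$, and the linear independence of $\set{\wt(x) \given x \in Q}$ forces $w$ and $\psi(w)$ to have the same content. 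Hence $\psi(w)$ is a rearrangement of the letters of $w$, and restricting $\psi$ to the component of $w$ (which is that of $u$) gives $w \plco \psi(w)$ for \emph{every} such $w$.

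Next I would upgrade $\psi$ from an isomorphism of $\otimes$-quasi-crystals to one of $\dotimes$-quasi-crystals. Because $Q^*$ is free and therefore equidivisible, \comboref{Proposition}{prop:tpqcmqtpqcm} applies, and its resulting $\dotimes$-structure on $Q^*$ is precisely that of $\fqtpqcm{\qcrstQ}$: it is obtained from $\ftpqcm{\qcrstQ}$ by placing, for each $i \in I$, an $i$-loop on every word $w = x_1 \ldots x_m$ carrying some $k < l$ with $\qKofc_i (x_k) > 0$ and $\qKoec_i (x_l) > 0$ (so that $\qtpqKoec_i (w) = \qtpqKofc_i (w) = +\infty$ and $\qtpqKoe_i, \qtpqKof_i$ are undefined on $w$), while leaving every other word with its $\otimes$-data. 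The key claim is that $\psi$ respects this dichotomy: for each $w$ and each $i$, since $w \plco \psi(w)$ and $\psi(w)$ is a permutation of $w$, \comboref{Lemma}{lem:plcoliinv} gives that $w$ satisfies the loop condition for $i$ if and only if $\psi(w)$ does. Thus $\psi$ maps $i$-loops to $i$-loops, while on words without an $i$-loop the maps $\qtpqKoe_i, \qtpqKof_i, \qtpqKoec_i, \qtpqKofc_i$ reduce to their $\otimes$-counterparts, which $\psi$ already preserves.

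From here the argument is formal. The loop-invariance just established shows that $\psi$ preserves $\qtpqKoec_i$ and $\qtpqKofc_i$ (both sides are $+\infty$ on loops, and equal to the $\otimes$-data otherwise) and that it carries $\dotimes$-edges to $\dotimes$-edges (an $\otimes$-edge survives into $\fqtpqcm{\qcrstQ}$ exactly when neither of its endpoints acquires a loop). Applying the same reasoning to $\psi^{-1}$, for which $v \plco u$, shows that $\psi$ restricts to a bijection between the $\dotimes$-components $\fqtpqcm{\qcrstQ}(u)$ and $\fqtpqcm{\qcrstQ}(v)$, which lie inside the respective $\otimes$-components because every $\dotimes$-edge is an $\otimes$-edge. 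This restriction is a $\dotimes$-quasi-crystal isomorphism sending $u$ to $v$, so $u \hyco v$; hence ${\plco} \subseteq {\hyco}$. As both relations are monoid congruences on $Q^*$ by \comboref{Proposition}{prop:plcomoncong} and \comboref{Proposition}{prop:hypocomoncong}, the inclusion induces a surjective monoid homomorphism $\plac(\qcrstQ) \to \hypo(\qcrstQ)$, exhibiting $\hypo(\qcrstQ)$ as a quotient of $\plac(\qcrstQ)$.

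The hard part will be the middle paragraph: I must verify that ``having an $i$-loop'' is invariant under $\psi$ not just for the pair $u, v$ but for \emph{every} word of the connected component, and then that this pointwise invariance, combined with preservation of the $\otimes$-structure, globalizes to an isomorphism of the (generally smaller) $\dotimes$-connected components. This is exactly where the linear-independence hypothesis — used to turn a plactic relation into a genuine permutation of letters — feeds into \comboref{Lemma}{lem:plcoliinv} to transport the loop condition across that permutation.
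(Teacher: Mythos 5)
Your proposal is correct and follows essentially the same route as the paper's proof: restrict the $\otimes$-isomorphism to the $\dotimes$-connected component, use weight preservation plus linear independence to see that related words are permutations of one another, and invoke \comboref{Lemma}{lem:plcoliinv} to transport the loop condition (the $+\infty$ case) across that permutation, then repeat for the inverse map. The only cosmetic difference is that you establish the permutation property globally at the outset and phrase the $\dotimes$-structure via \comboref{Proposition}{prop:tpqcmqtpqcm}, whereas the paper invokes linear independence only in the case where a loop-forcing decomposition appears and works directly from \comboref{Definitions}{dfn:fqcmtp} and \ref{dfn:fqcmqtp}.
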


\begin{proof}
Let $u, v \in Q^*$ be such that $u \plco v$.
From \comboref{Definitions}{dfn:fqcmtp} and\avoidrefbreak \ref{dfn:fqcmqtp}, the weight maps of $\ftpqcm{\qcrstQ}$ and $\fqtpqcm{\qcrstQ}$ coincide.
We write a superscript $\otimes$ on the other quasi-crystal structure maps of $\ftpqcm{\qcrstQ}$ and write a superscript $\dotimes$ on the other quasi-crystal structure maps of $\fqtpqcm{\qcrstQ}$.
From the definitions, for any $w \in Q^*$ and $i \in I$, we have that if $\qtpqKoe_i$ (or $\qtpqKof_i$) is defined on $w$, then $\qtpqKoe_i (w) = \tpqKoe_i (w)$ (resp., $\qtpqKof_i (w) = \tpqKof_i (w)$).

Denote the underlying sets of $\ftpqcm{\qcrstQ} (u)$, $\ftpqcm{\qcrstQ} (v)$, $\fqtpqcm{\qcrstQ} (u)$, and $\fqtpqcm{\qcrstQ} (v)$ by $Q^*_{\otimes} (u)$, $Q^*_{\otimes} (v)$, $Q^*_{\dotimes} (u)$, and $Q^*_{\dotimes} (v)$, respectively.
Clearly, $Q^*_{\dotimes} (u) \subseteq Q^*_{\otimes} (u)$ and $Q^*_{\dotimes} (v) \subseteq Q^*_{\otimes} (v)$.
As $u \plco v$, consider a quasi-crystal isomorphism $\psi_{\otimes} : \ftpqcm{\qcrstQ} (u) \to \ftpqcm{\qcrstQ} (v)$ such that $\psi(u) = v$.
Let $\psi_{\dotimes}$ denote the restriction of $\psi_{\otimes}$ to $Q^*_{\dotimes} (u)$. 
Thus, $\psi_{\dotimes} (u) = v$.
We now show that $\psi_{\dotimes}$ is a quasi-crystal isomorphism between $\fqtpqcm{\qcrstQ} (u)$ and $\fqtpqcm{\qcrstQ} (v)$.

Let $u' \in Q^*_{\dotimes} (u)$, and set $v' = \psi_{\dotimes} (u')$.
We have that
\[ \wt(v') = \wt \parens[\big]{ \psi_{\dotimes} (u') } = \wt \parens[\big]{ \psi_{\otimes} (u') } = \wt(u'). \]
Let $i \in I$.
If $u'$ and $v'$ do not admit a decomposition of the form $w_1 x w_2 y w_3$ with $w_1, w_2, w_3 \in Q^*$ and $x, y \in Q$ such that $\qtpqKofc_i (x) > 0$ and $\qtpqKoec_i (y) > 0$,
then the values of $\qtpqKoe_i$, $\qtpqKof_i$, $\qtpqKoec_i$, and $\qtpqKofc_i$ on $u'$ and $v'$ coincide with the values of $\tpqKoe_i$, $\tpqKof_i$, $\tpqKoec_i$, and $\tpqKofc_i$ on $u'$ and $v'$, respectively,
which implies that 
$\psi_{\dotimes} \parens[\big]{ \qtpqKoe_i (u') } = \qtpqKoe_i \parens[\big]{ \psi_{\dotimes} (u') }$,
$\psi_{\dotimes} \parens[\big]{ \qtpqKof_i (u') } = \qtpqKof_i \parens[\big]{ \psi_{\dotimes} (u') }$,
$\qtpqKoec_i \parens[\big]{ \psi_{\dotimes} (u') } = \qtpqKoec_i (u')$,
and $\qtpqKofc_i \parens[\big]{ \psi_{\dotimes} (u') } = \qtpqKofc_i (u')$,
because $u' \plco v'$ as $\psi_{\otimes} (u') = v'$.

Otherwise, $u'$ or $v'$ admits a decomposition of the form $w_1 x w_2 y w_3$ with $w_1, w_2, w_3 \in Q^*$ and $x, y \in Q$ such that $\qtpqKofc_i (x) > 0$ and $\qtpqKoec_i (y) > 0$.
Take $x_1, \ldots, x_p, y_1, \ldots, y_q \in Q$ with $u' = x_1 \ldots x_p$ and $v' = y_1 \ldots y_q$.
Thus, there exist $k > l$ such that $\qtpqKofc_i (x_k) > 0$ and $\qtpqKoec_i (x_l) > 0$, 
or there exist $k' > l'$ such that $\qtpqKofc_i (y_{k'}) > 0$ and $\qtpqKoec_i (y_{l'}) > 0$.
Since the set
$\set[\big]{ \wt(x) \given x \in Q }$
is linearly independent and
\[ \wt(x_1) + \cdots + \wt(x_p) = \wt(u') = \wt(v') = \wt(y_1) + \cdots + \wt(y_q), \]
we get that $p=q$ and the letters $x_1, \ldots, x_p$ and $y_1, \ldots, y_p$ are the same and occur exactly the same number of times.
That is, there exists a permutation $\sigma$ on $\set{1,\ldots,m}$ such that $v' = x_{\sigma(1)} \ldots x_{\sigma(p)}$.
Note that $\qtpqKoec_i$ and $\qtpqKofc_i$ coincide with $\tpqKoec_i$ and $\tpqKofc_i$ on $Q$, respectively.
As $u' \plco v'$, by \comboref{Lemma}{lem:plcoliinv}, there exist $k, l, k', l' \in \set{1,\ldots,p}$ such that $k < l$, $\qtpqKofc_i (x_k) > 0$, $\qtpqKoec_i (x_l) > 0$, $\sigma(k') > \sigma(l')$, $\qtpqKofc_i (x_{\sigma(k')}) > 0$, and $\qtpqKoec_i (x_{\sigma(l')}) > 0$.
Hence, $\qtpqKoec_i \parens[\big]{ \psi_{\dotimes} (u') } = \qtpqKofc_i \parens[\big]{ \psi_{\dotimes} (u') } = \qtpqKoec_i (u') = \qtpqKofc_i (u') = +\infty$.

We get that $\psi_{\dotimes}$ is a quasi-crystal homomorphism from $\fqtpqcm{\qcrstQ} (u)$ to $\fqtpqcm{\qcrstQ} (v)$.
Analogously, as $(\psi_{\otimes})^{-1}$ is a quasi-crystal isomorphism between $\ftpqcm{\qcrstQ} (v)$ and $\ftpqcm{\qcrstQ} (u)$, we obtain by a similar reasoning that its restriction to $Q^*_{\dotimes} (v)$ is a quasi-crystal homomorphism from $\fqtpqcm{\qcrstQ} (v)$ to $\fqtpqcm{\qcrstQ} (u)$.
Therefore, $\psi_{\dotimes}$ is a quasi-crystal isomorphism between $\fqtpqcm{\qcrstQ} (u)$ and $\fqtpqcm{\qcrstQ} (v)$, which implies $u \hyco v$.
\end{proof}

For the standard crystal $\crtAn$ of type $\tAn$, the weights $\wt(x) = \vc{e_x}$, $x \in A_n$, are linearly independent.
Thus, from the previous proposition, we get the well-known result that the classical hypoplactic monoid is a quotient of the classical plactic monoid.

\bibliography{\jobname}

\end{document}